\numberwithin{equation}{section}
\newcommand{\psiw}{\psi_{\mathtt w}}
\renewcommand{\Re}{\mathrm{Re}}
\renewcommand{\Im}{\mathrm{Im}}
\newtheorem{prop}{Proposition}[section]
\newtheorem{Theorem}[prop]{Theorem}
\newtheorem{Lemma}[prop]{Lemma}
\newtheorem{Definition}[prop]{Definition}
\newtheorem*{MT}{ Main Theorem}
\newtheorem{Remark}[prop]{Remark}
\title[Domino shuffling and stochastic growth]{The domino shuffling algorithm and Anisotropic KPZ stochastic growth}
\author{Sunil Chhita}
\address{Department of Mathematical Sciences, Durham University, Stockton Road, Durham, DH1 3LE, UK.} \email{sunil.chhita@durham.ac.uk} 
\author{Fabio Lucio Toninelli}
\address{Technische Universität Wien, Institut f\"{u}r Stochastik und Wirtschaftsmathematik, Wiedner Hauptstraße 8-10/105-7, A-1040 Wien, Austria}\email{fabio.toninelli@tuwien.ac.at}
\begin{document}
\maketitle

\begin{abstract}
  The domino-shuffling algorithm \cite{EKLP92,Pro03} can be seen as a
  stochastic process describing the irreversible growth of a
  $(2+1)$-dimensional discrete interface
  \cite{CT:18,zhang2018domino}. Its stationary speed of growth
  $v_{\mathtt w}(\rho)$ depends on the average interface slope $\rho$,
  as well as on the edge weights $\mathtt w$, that are assumed to be
  periodic in space. We show that this growth model belongs to the
  Anisotropic KPZ class \cite{Wol91,toninelli20173+}: one has
  $\det [D^2 v_{\mathtt w}(\rho)]<0$ and the height fluctuations grow
  at most logarithmically in time. Moreover, we prove that
  $D v_{\mathtt w}(\rho)$ is discontinuous at each of the (finitely
  many) smooth (or ``gaseous'') slopes $\rho$; at these slopes,
  fluctuations do not diverge as time grows. For a special case of
  spatially $2-$periodic weights, analogous results have been recently
  proven \cite{CT:18} via an explicit computation of
  $v_{\mathtt w}(\rho)$. In the general case, such a computation is out
  of reach; instead, our proof goes through a relation between the
  speed of growth and the limit shape of domino tilings of the Aztec
  diamond.
\end{abstract}
\section{Introduction}
In the realm of stochastic interface growth \cite{BS95}, dimension
$(2+1)$ (i.e., growth of a two-dimensional interface in
three-dimensional physical space) plays a distinguished role. In
$(1+1)$ dimensions, one finds a non-trivial KPZ growth exponent
$\beta=1/3$ as soon as the growth process is genuinely non-linear,
while in dimension $(d+1),d\ge3$ a phase transition is expected
\cite{KPZ86} between a regime of small non-linearity, where the
process behaves qualitatively like the stochastic heat equation (SHE)
with additive noise, and a regime of large non-linearity,
characterized by new growth and roughness critical exponents. See the
recent
\cite{magnen2018scaling,dunlap2018fluctuations,comets2019renormalizing}
for mathematical progress on the small non-linearity regime of the KPZ
equation for $d\ge3$. On the other hand, dimension $(2+1)$ is the
``critical'' or ``marginal'' case: here, the critical exponents are
expected to depend not so much on the intensity of the non-linearity,
but rather on its structure.  In fact, in this case, the existence of
two different universality classes has been conjectured
\cite{Wol91,BS95} (see \cite{toninelli20173+} for a recent
mathematical review). The first, called Anisotropic KPZ (or AKPZ)
class, is characterized by logarithmic growth of height fluctuations
in space and time, like the two-dimensional SHE with additive
noise. The second, called KPZ class \emph{tout court}, has universal
and non-trivial roughness and growth exponents,
$\alpha_{KPZ}\simeq 0.39$ and $\beta_{KPZ}\simeq 0.24$ respectively
(these values are known only numerically,
cf. e.g. \cite{TFW92,HH12}). Conjecturally, the universality class of
a model is determined by the properties of the average speed of growth
$v(\rho)=\lim_{t\to\infty}\frac1t\mathbb E[h(t,x)-h(0,x)]$ of the interface height function $h$, where $\rho$ is the macroscopic slope of the initial condition.  Namely, a model is
expected to belong to the AKPZ class if and only if
$\det(D^2 v(\rho))\le 0$, where  $D^2 v(\rho)$ is 
the $2\times 2$ Hessian matrix.  From the mathematical point of view, the
understanding of the AKPZ universality class has remarkably progressed
lately but it is still limited to a few special cases (see Section
\ref{sec:relw} for references). For the KPZ class, very interesting
recent developments (in a somewhat different direction) concern the
weak non-linearity (or weak-disorder) regime
\cite{chatterjee2018constructing,caravenna2018two}: if non-linearity
is scaled to zero as $\hat \beta/\sqrt{|\log\epsilon|}$, with
$\epsilon\to0$ a noise regularization parameter and provided
$\hat \beta$ is smaller than a precisely identified critical value
$\hat \beta_c$ \cite{caravenna2018two}, then the KPZ equation scales
to the SHE with additive noise. In this regime, the non-trivial exponents
$\alpha_{KPZ},\beta_{KPZ}$ do not emerge.

In the present work, we focus on the so-called ``domino shuffling
algorithm''. This is a discrete-time Markov chain on perfect matchings
(or ``domino tilings'') of $\mathbb Z^2$, that was originally devised
\cite{EKLP92,Pro03} as a way to exactly sample and to count perfect
matchings of certain special two-dimensional domains (Aztec
diamonds). When this algorithm is run on the infinite square grid, it
can be seen also as a $(2+1)$-dimensional growth model, and it is from
this point of view that we consider it here. The shuffling algorithm
is actually an infinite-dimensional family of growth processes,
indexed by the edge weights $\mathtt w$, that we only assume to be
positive and periodic in both lattice directions, with some period
$2n\in 2\mathbb N$.  Along the dynamics, the edge weights also evolve
(deterministically) in time. In fact, the evolution
$\{\mathtt w_k\}_{k\ge0}$ of edge weights under the shuffling
algorithm (or ``spider moves'') has a a remarkable interest in itself, as a
classical integrable dynamical system \cite{GK13}. Its trajectories
are in general not time-periodic.

For generic edge weights of period $2n$, there are $2n(n-1)+1$ special
values for the slope (``smooth'' or ``gaseous'' slopes), that
correspond to ``cusps'' of the surface free energy $\sigma(\rho)$ of
domino tilings with weights $\mathtt w$. The slopes at which $\sigma$
is smooth are instead referred to as ``rough slopes'' (the reason for
the nomenclature smooth/rough is reminded in Section \ref{sec:pwat}).
We let $\mathcal S$ (resp. $\mathcal R$) denote the set of smooth
(resp. rough) slopes.

Our main result is that the domino shuffling algorithm (with general
weights $\mathtt w$) belongs to the AKPZ class, and that the speed of
growth is singular at each of the smooth slopes (see Theorem
\ref{th:1} and Section \ref{sec:fluct} for more precise statements):
\begin{MT}[Informal version] 
  For $\rho\in\mathcal R$, the speed of growth function
  $\rho\mapsto v_{\mathtt w}(\rho)$ is $C^\infty$ and
  $\det[D^2v_{\mathtt w}(\rho)]<0$. On the other hand, the gradient
  $D v_{\mathtt w}(\rho)$ is discontinuous at each of the finitely
  many slopes $\rho\in\mathcal S$. For
  $\rho\in \mathcal R$, the height fluctuations grow logarithmically in
  space (they scale to a Gaussian Free Field) and at most
  logarithmically in time. For $\rho\in \mathcal S$, the variance of
  the height fluctuations is uniformly bounded in space and time.
\end{MT}

In a special case of $2$-periodic weights ($n=1$)
analogous results have been proven recently in \cite{CT:18}. In that
case, there is a single smooth slope ($|\mathcal S|=1$) and the explicit computation of
$v_{\mathtt w}(\rho)$ is doable, though rather involved, via Kasteleyn
theory. In the general case we are considering here, computing
$v_{\mathtt w}(\rho)$ directly using Kasteleyn theory seems very
complicated, and we do not proceed that way.  The first key point in
the proof of the theorem is a simple relation (cf. \eqref{eq:chiev})
between $v_{\mathtt w}(\cdot)$ and the limit shape $\psi_{\mathtt w}$
of the dimer model with edge weights $\mathtt w$ in the Aztec diamond.
The limit shape is nothing but the solution of the Euler-Lagrange
equation \cite{KO07} associated to the dimer model's surface tension,
with weights $\mathtt w$ and boundary conditions determined by the
geometry of the domain.  This relation allows to translate analytic
properties of $v_{\mathtt w}(\cdot)$ into analytic properties of the
limit shapes, for which we use results from \cite{Duse,SS10}.  In
particular, singularities of $v_{\mathtt w}(\cdot)$ are in bijection
with the facets (flat portions) of $\psi_{\mathtt w}$ that do not
touch the boundary of the Aztec diamond or, equivalently, with the
holes of the amoeba of the spectral curve
\cite{Kenyon2003}. In \cite{CT:18}, the discontinuity of
$Dv_{\mathtt w}(\rho)$ at the unique smooth phase was found via the
explicit formula, but the connection with the facet of the limit shape
was not realized.  Another point we wish to emphasize is that, since
edge weights change non-periodically with time as
$\mathtt w=\{\mathtt w_k\}_{k\ge0}$, it is a priori not obvious that an
asymptotic speed of growth even exists (the connection with the limit
shape shows that it does, because the limit shape $\psi_{\mathtt w_k}$
is actually independent of $k$).

Let us conclude this section by mentioning a recent article
\cite{zhang2018domino}, that proves a hydrodynamic limit for the
domino shuffling dynamics, in the form of the convergence of the rescaled 
height profile to the viscosity solution of the non-linear
Hamilton-Jacobi PDE $\partial_t \phi=v_{\mathtt w}(\nabla\phi)$.  The result of
\cite{zhang2018domino} is stated for the case of edge weights with
space periodicity $1$, but the same proof presumably works for general
periodic edge weights, as in the framework of the present article.

\subsection{Related works on AKPZ growth models}
\label{sec:relw}
Historically, the first rigorous result we are aware of, on a
$(2+1)$-dimensional growth model in the AKPZ class, is
\cite{prahofer1997exactly}, that computed the speed of growth of the
Gates-Westcott model \cite{gates1995stationary}, verified that
$\det(D^2 v(\rho))< 0$ and proved that stationary states are only
logarithmically rough, in agreement with the above conjecture (growth
of flucutations in time was not studied there).  More recently, a
growth model that is a $(2+1)$-dimensional, discrete, analog of
Hammersley's process has been introduced in \cite{BF08}. Besides the
computation of the speed of growth and the verification of
$\det(D^2 v(\rho))< 0$, rigorous results on this model include the
proof that height fluctuations grow at most logarithmically in space
\emph{and} time \cite{BF08,toninelli20172+}, the study of stationary
states \cite{toninelli20172+}, hydrodynamic limits for the height
profile \cite{BF08,legras2019hydrodynamic}, determinantal formulas for
certain space-time correlations \cite{BF08} and a CLT on scale
$\sqrt{\log t}$ for height flucutations under special initial
conditions \cite{BF08}.  Some of these results have been extended to
an AKPZ growth process defined in terms of the dimer model on the
square grid, see \cite{chhita2017speed}.

Apart from the above references, that deal with specific models, let
us mention \cite{borodin2018two}, that gives a sufficient condition
for a $(2+1)$-dimensional growth model to belong to the AKPZ class. In simple
terms, \cite[Th. 2.1]{borodin2018two} states that if the hydrodynamic
equation $\partial_t \phi=v(\nabla\phi)$ preserves solutions of the
Euler-Lagrange equations associated to \emph{some} strictly convex
surface tension function $\sigma(\cdot)$, then
$\det(D^2 v(\rho))\le 0$. This condition can be verified on several
growth models, e.g.  the one defined in \cite{BF08}, and it is related
to the fact that these stochastic processes preserve a certain ``local
Gibbs property'' ($\sigma$ is then the surface tension corresponding with such Gibbs potential).

\medskip The rest of the paper is organized as follows.  In
Section~\ref{sec:Model}, we introduce the dimer model on
$\mathbb{Z}^2$ for general weights, give some dimer model theory and
give a precise version of our theorem.  In
Section~\ref{sec:Identification}, we prove the existence of the speed
and its formula, while the main properties of the speed are proven in
Section~\ref{sec:Properties}.

\section{Model and results} \label{sec:Model}

In this section, we introduce the shuffling algorithm for the dimer
model on $\mathbb{Z}^2$ for general weights, some of the basic dimer
model theory and we precisely formulate our results.

\subsection{Shuffling algorithm for the dimer model on $\mathbb{Z}^2$ (general weights)} 

 The vertices of the graph  $\mathbb{Z}^2$ are colored
black and white in a bipartite way and they are assigned Cartesian
coordinates, that is the neighbouring vertices which share a common
edge with the vertex $(0,0)$ are $(1,0), (0,1)$, $(-1,0)$ and
$(0,-1)$.  We label a face  $(i,j)\in \mathbb{Z}^2$ if its
center has coordinates $(i+1/2,j+1/2)$; see Fig.~\ref{fig:coordsfaces}
for an example on a $4\times 4$ torus graph.
\begin{figure}
  \centering
		\includegraphics[height=4cm]{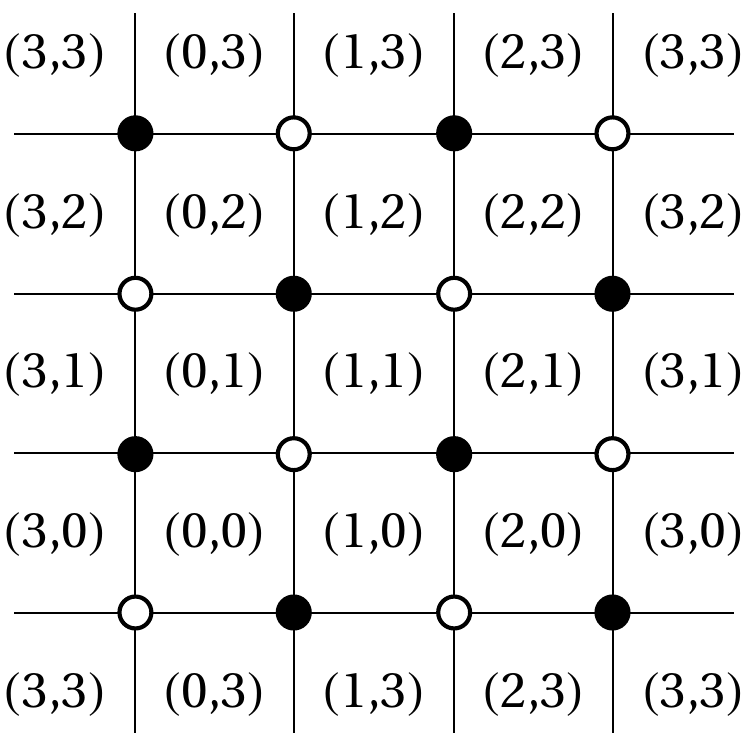}
		\caption{Coordinates of the faces }
              	\label{fig:coordsfaces}
\end{figure}

The discrete time index of the Markov chain will be denoted
$k=0,1,\dots$.  We will say that a face $(i,j)$ is \emph{even} if its bottom-left vertex is white, and \emph{odd} otherwise. In the dynamics
defined below, the colors of the vertices will interchange at each
time step $k$ and we assume that initially the vertex $(0,0)$ is white. Therefore, a face with coordinates $(i,j)$ will be even at time
$k$ if $i+j=k\mod 2$ and odd otherwise.

Given a weighting $\mathtt{w}$ of the edges, i.e. an assignment of a
strictly positive weight to each edge, we first define a deterministic
sequence $\{\mathtt{w}_k\}_{k\ge0}$ of edge weightings with $\mathtt w_0:=\mathtt w$.  To this
purpose, note first that the weighting is uniquely defined if we
specify the weights of edges on the boundary of every even
face. We write then
$$\mathtt{w}_k=\{(w_{i,j;k}^a, w_{i,j;k}^b, w_{i,j;k}^c,w_{i,j;k}^d):
(i,j) \in\mathbb{Z}^2, (i+j)=k\hspace{-2mm}\mod
2\}$$ where the 4-tuple of positive numbers
$(w_{i,j;k}^a, w_{i,j;k}^b, w_{i,j;k}^c, w_{i,j;k}^d)$ denotes the
edge weights around the face $(i,j)$ at time $k$, where $a,b,c$
and $d$ are the edges labelled clockwise around the face, with $a$ being
the topmost horizontal edge on that face.  Also for
$(i,j) \in \mathbb{Z}^2$ and $k \geq 0$, 
set
$$\Delta_{i,j;k}= w_{i,j;k}^a w_{i,j;k}^c +w_{i,j;k}^b w_{i,j;k}^d.$$
The relation between $\mathtt{w}_k$ and $\mathtt{w}_{k+1}$ is, by definition,
\begin{multline}
	\label{eq:wktowk+1}
	(w_{i,j;k+1}^a, w_{i ,j;k+1}^b, w_{i,j;k+1}^c,w_{i,j;k+1}^d)\\:= \left( \frac{w_{i,j+1;k}^a}{\Delta_{i,j+1;k}} ,  \frac{w_{i+1,j;k}^b}{\Delta_{i+1,j;k}}, \frac{w_{i,j-1;k}^c}{\Delta_{i,j-1;k}}, \frac{w_{i-1,j;k}^d}{\Delta_{i-1,j;k}} \right)
\end{multline} 
for $k\geq 0$ and $(i+j)=k+1\hspace{-2mm}\mod 2$; see Fig. \ref{fig:pesi}.
\begin{figure}
	\begin{center}
		\includegraphics[height=6.5cm]{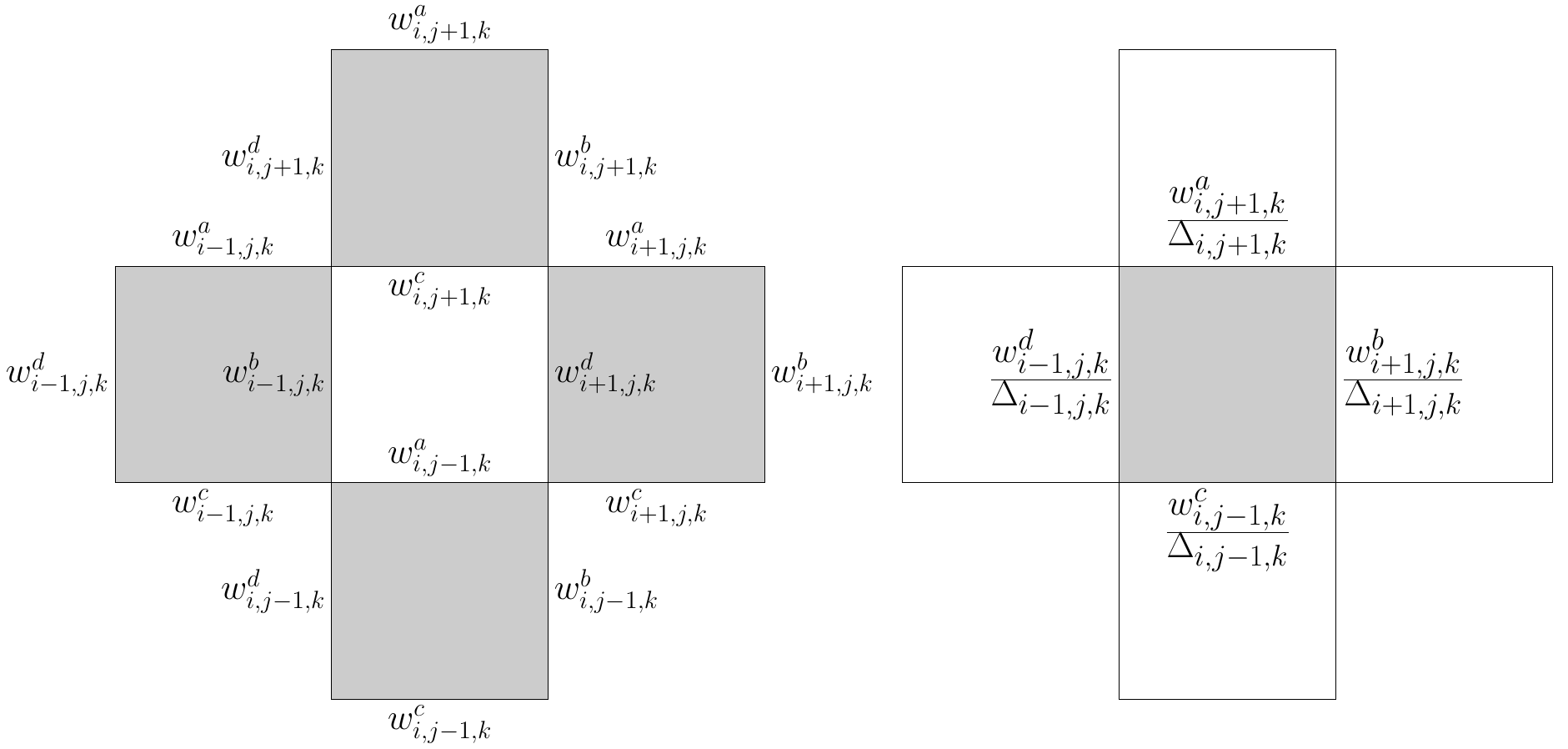}
		\caption{The left figure shows the weights at time $k$ while the right figure shows the weights at time $k+1$ after applying the shuffling algorithm. In each figure, the shaded squares denote the even faces, at times $k$ and $k+1$ respectively. The central face has coordinates $(i,j)$ with $(i+j)=k+1\mod 2$.  }
		\label{fig:pesi}
	\end{center}
\end{figure}

We are now ready to define the shuffling algorithm. This is a
discrete-time Markov chain on $\Omega$, the set of dimer coverings, or
perfect matchings, of $\mathbb{Z}^2$. That is, each $\eta\in\Omega$ is a subset of edges of $\mathbb Z^2$, such that
each vertex is contained in exactly one of them.  Each edge contained
in $\eta$ will be said to be  ``occupied by a dimer''. The chain is not
time-homogeneous, since the transition rates depend on the time index
$k$, via the edge weights $\mathtt{w}_k$.  For $k\geq 0$, we define a
random map $\Omega\ni \eta \mapsto {T_{k+1}}(\eta)\in\Omega$ through the
following four steps, cf. Fig.~\ref{fig:spiderconfigs} (only the third
one is actually random):
\begin{figure}
	\begin{center}
		\includegraphics[height=2cm]{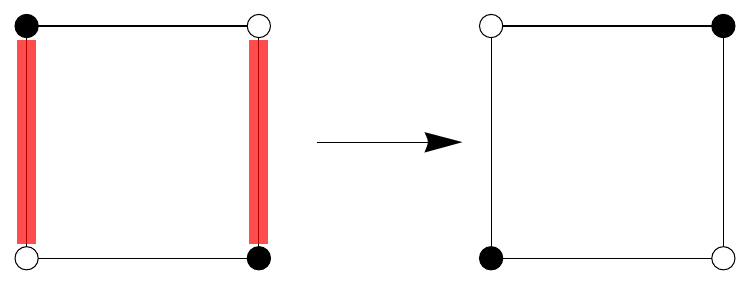}
		\hspace{15mm}
		\includegraphics[height=2cm]{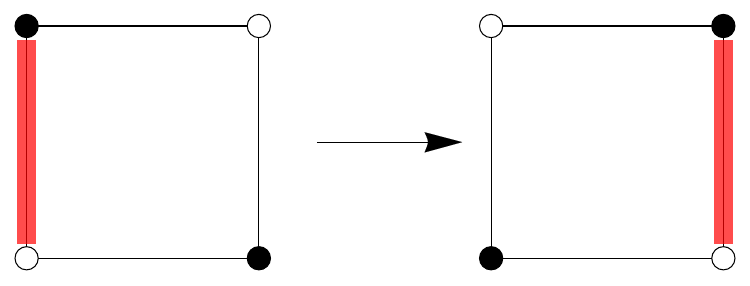}
		\includegraphics[height=2cm]{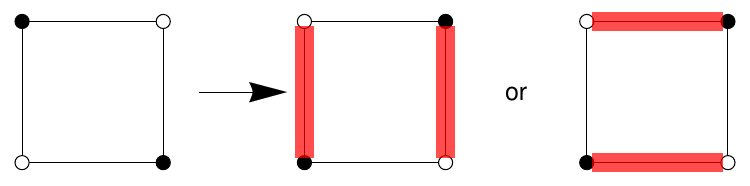}
		\caption{The four steps of the dynamics applied to an even face for the three different possibilites (up to rotations) of boundary edges at that face.}
		\label{fig:spiderconfigs}
	\end{center}
\end{figure}
\begin{enumerate}
\item [(Deletion step)] All pairs of parallel dimers of $\eta$
  covering two of the four boundary edges of any face that is even (at
  time $k$) are removed.
\item [(Sliding step)] For every even face (at time $k$) with only
  one boundary edge covered by a dimer of $\eta$, slide this dimer
  across that face.
\item [(Creation step)] For each face that is even at time $k$ (call $(i,j)$ its coordinates), if there are no dimers of
  $\eta$ covering any of its four boundary edges, add two parallel
  vertical dimers to the face  with probability
		\[ \frac{w^{b}_{i,j;k} w^{d}_{i,j;k}}{\Delta_{i,j;k}}\] or two parallel
  horizontal dimers with probability
  \[ \frac{w^{a}_{i,j;k} w^{c}_{i,j;k}}{\Delta_{i,j;k}}\] (the operations are
  performed independently for each $(i,j)$ and $k$).
\item[(Interchange step)] Interchange the white and black colors of  vertices of
  the graph. 
\end{enumerate}
It is well known, and easy to check, that $T_k(\eta)\in\Omega$ if
$\eta\in\Omega$. 
The swapping of colors at each step, that may seem to be pointless at
this stage, will appear more natural in the discussion below of the
evolution of the height function.

The maps $T_k$ are independent but not identically distributed, since the edge weights depend on $k$. Iteratively applying these maps and
letting \[\eta_k:=T_{k}\circ\dots\circ T_1(\eta_0),\] one obtains the
desired Markov chain $\{\eta_k\}_{k\ge0}$ on $\Omega$.

\subsubsection{Height function and its evolution}
Each dimer configuration $\eta\in \Omega$ is in one-to-one
correspondence (up to a height
offset) with a height function $h_\eta(\cdot)$  which is defined on the faces of $\mathbb{Z}^2$ \cite{KenLectures}. That is, one fixes the height to be zero at some reference face $f_0$ and one defines the height gradients as
\begin{eqnarray}
  \label{eq:hgrad}
  h_\eta(f')-h_\eta(f)=\sum_{e\sim C_{f\to f'}}\sigma_e (\mathds 1_{e\in \eta}-1/4)
\end{eqnarray}
where the sum runs over the edges crossed by a nearest-neighbor path
$C_{f\to f'}$ from $f$ to $f'$, $\mathds 1_{e\in \eta}$ is the
indicator that $e$ is occupied by a dimer and $\sigma_e=+1$ or $-1$
according to whether $e$ is crossed with the white vertex on the right
or left. The r.h.s. of \eqref{eq:hgrad} is well-known to be independent of the choice of $C_{f\to f'}$.

In order for the shuffling algorithm to define a Markovian evolution
of the height profile, we have to complement the definition of the
maps $T_k$ with a prescription of how the height offset evolves as
time $k$ increases.  The convention that we adopt here is slightly
different from that of \cite{zhang2018domino,CT:18}.  We start with
the following observation, which is immediately verified from the
definition of $T_k$ and of the height function (recall that vertex
colors are swapped at each step).  Let $f,f'$ be any two faces that
are odd at time $k$, i.e. they have coordinates $(i,j)$ and $(i',j')$
respectively, with $i+j=k+1\mod 2$ and $i'+j'=k+1\mod 2$; then,
\[
h_{T_{k+1} (\eta)}(f)-h_{T_{k+1}(\eta)}(f')=h_{\eta}(f)-h_{\eta}(f').
  \]
Therefore, we make the following choice: 
\begin{Definition}
  \label{def:ho}
  If $f$ is an odd  face at time $k$, then
  \begin{eqnarray}
    \label{eq:ch1}
    h_{T_{k+1} (\eta)}(f)=h_{\eta}(f).
  \end{eqnarray}
\end{Definition}
This convention fixes unambiguously the whole height function of $\eta_{k+1}$ and in
particular the value of $h_{T_{k+1}(\eta)}(f)$ for \emph{even} faces $f$.  Namely, let $f$ be any face  and let $\eta|_{\partial f}$
(resp.  $T_{k+1}(\eta)|_{\partial f}$) be the restriction of the dimer
configuration $\eta$ (resp. $T_{k+1}(\eta)$) to the four boundary edges of
$f$. Then, one may check by direct inspection starting from the definition of $T_{k+1}$
that,  if $f$ is even at time $k$, then
\begin{eqnarray}
  \label{eq:ch2}
  h_{T_{k+1} (\eta)}(f)-h_{\eta}(f)= \frac{
H[\eta]+H[T_{k+1}(\eta)]-V[\eta]-V[T_{k+1}(\eta)]}4,
\end{eqnarray}
where (denoting $e_1,\dots e_4$ the four boundary edges of $f$, labeled clockwise from the top one),
\begin{eqnarray}
  \label{eq:hor}
  H[\eta]=\mathds 1_{e_1}(\eta)+ \mathds 1_{e_3}(\eta)
\end{eqnarray}
and
\begin{eqnarray}
  \label{eq:vert}
  V[\eta]=\mathds 1_{e_2}(\eta)+ \mathds 1_{e_4}(\eta); 
\end{eqnarray}
see Fig.~\ref{fig:spiderheights}.
\begin{figure}
	\begin{center}
		\includegraphics[height=2.5cm]{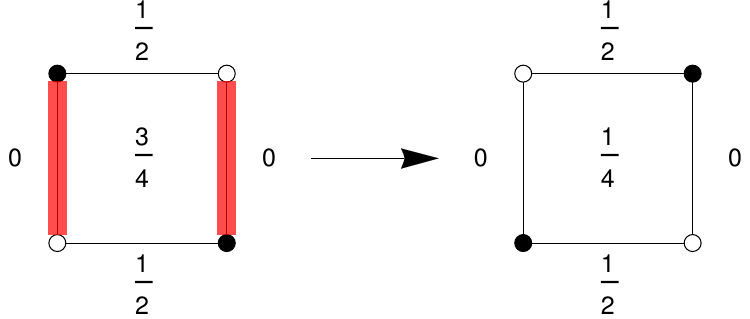}
		\hspace{15mm}
		\includegraphics[height=2.5cm]{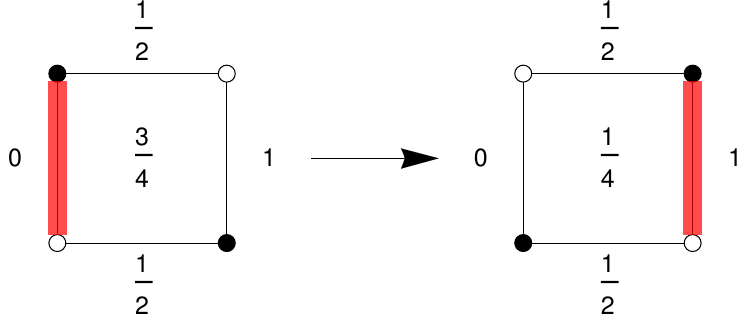}
		\includegraphics[height=2.5cm]{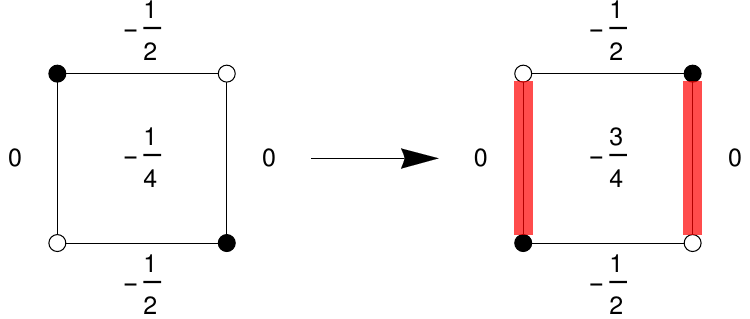}
		\caption{The height function change at even faces for
                  configurations only concerning vertical dimers. One
                  can easily obtain the same picture for horizontal
                  edges, by rotating each configuration by $\pi/4$,
                  interchanging the white and black vertices (and as a
                  result multiplying all heights by $-1$). }
		\label{fig:spiderheights}
	\end{center}
\end{figure}


\subsection{Periodic weights} 
\label{sec:pwat}

In this section, we introduce briefly some of the main aspects of the
dimer model machinery needed for the formulation of the main result.
Since we are interested in stochastic growth in a translationally
invariant situation, here we specialize to the case where the edge
weights are periodic in both directions of space.  Let the fundamental
domain $D_{0,0}$ of size $2n\times 2n, n\in\mathbb N$, consist of the
vertices $\{(i,j): 0 \leq i,j \leq 2n-1 
\}$, 
half of which are black and half white.  For $ 0 \leq j \leq 2n-1$,
the vertices $(2n,j)$ are identified with the vertices $(0,j)$ but are
on the fundamental domain $D_{1,0}$ (obtained from $D_{0,0}$ via a
horizontal translation by $2n$), while for $0 \leq i \leq 2n-1$, the
vertices $(i,2n)$ are identified with the vertices $(i,0)$ but on the
fundamental domain $D_{0,1}$.  The edge weights are chosen on all
edges on $D_{0,0}$ and its boundary edges and then extented by
periodicity to the whole graph.  Call this weighting
$\mathtt{w}_0$.%

Underlying the dimer model theory is the {characteristic polynomial}
$P$.  To define $P$, consider $D_{0,0}$ embedded on a $2n\times 2n$  torus as
above 
and let $\mathrm{wt}(x,y)$ denote the weight of the edge $(x,y)$ for
two vertices $x$ and $y$ of $D_{0,0}$.  Given $z,w\in \mathbb C$,
define $K(z,w)$ to be the Kasteleyn matrix with rows indexed by white vertices
and columns indexed by black vertices of $D_{0,0}$, with
\[
	(K(z,w))_{xy} = \left\{ \begin{array}{lll}
		\mathrm{wt}(x,y)z^a  & \text{if} & \mbox{$(x,y)$ is a horizontal edge,}\\
		\mathrm{i} \,\mathrm{wt}(x,y)  w^b & \text{if} &\mbox{$(x,y)$ is a vertical edge,}\\
		0 & \text{if} & \mbox{$(x,y)$ is not an edge}
	\end{array} \right.
	\]
where $x$ is a white vertex and $y$ is black vertex in $D_{0,0}$, and
\[
	a= \left\{ \begin{array}{ll}
	1 & \mbox{if $x=(2n-1,k)$ and $y=(0,k)$}\\
	-1 & \mbox{if $x=(0,k)$ and $y=(2n-1,k)$}\\
	0 & \mbox{otherwise} \end{array} \right. 
	\]
	and
	\[
	b= \left\{ \begin{array}{ll}
	1 & \mbox{if $x=(l,2n-1)$ and $y=(l,0)$}\\
	-1 & \mbox{if $x=(l,0)$ and $y=(l,2n-1)$}\\
	0 & \mbox{otherwise} \end{array}\right. 
\]
for $0 \leq k,l \leq 2n-1$.
The Laurent polynomial $P(z,w)=\det K(z,w)$ is called ``characteristic polynomial''~\cite{KOS03}.
Of course, $P$ depends on $n$ and on the weights.

From~\cite{KOS03}, the Newton Polygon (depending on $n$) is defined
to be
\[
	N(P)=\mbox{convex hull}\{ (j,k) \in \mathbb{Z}^2|z^j w^k \mbox{ is a monomial in }P(z,w) \}\subset \mathbb R^2.
	\]
One can check,  for the $K(z,w)$ specified above, that $N(P)$ is the (closed) square with vertices
$(\pm n,0),(0,\pm n)$. 

A probability measure $\mu$ on $\Omega$  is said to be an ergodic Gibbs measure
(corresponding to the edge weights $\mathtt w_0$) if:
\begin{itemize}
\item it  is invariant and ergodic
with respect to horizontal/vertical translations by multiples
of $2n$;
\item it satisfies the following  Dobrushin-Lanford-Ruelle (DLR) property. Given any finite subset
  $\Lambda$ of edges and any dimer configuration $\bar \eta\in \Omega$,
  let $\Omega_{\Lambda,\bar \eta}$ be the (finite) set of dimer configurations
  $\eta\in\Omega$ that coincide with $\bar\eta$ outside
  $\Lambda$. Then, conditionally on $\eta=\bar \eta$ outside
  $\Lambda$, the $\mu$-probability of a configuration $\eta$ is proportional
to  the product 
\[\prod_{e\in \eta\cap \Lambda} \mathtt w_0(e)\]
of $\mathtt w_0$-weights of the edges in $\Lambda $ occupied by dimers.

\end{itemize}
Thanks to translation invariance,  one may associate to
each ergodic Gibbs measure $\mu$ an average slope $\rho=(\rho_1,\rho_2)$. Here, $\rho_1$
(resp. $\rho_2$) is the expected height difference between a face in
$D_{0,0}$ and its translate in $D_{1,0}$ (resp. $D_{0,1}$). The slope
$\rho$ is contained in the  Newton polygon $N(P)$. Moreover, provided that  $\rho$ belongs $\stackrel{\circ}{N(P)}$, the
interior of $N(P)$, there exists a unique Gibbs measure with slope $\rho$
\cite{KOS03} and we denote it by
$\pi_{\rho,\mathtt{w}_0}$. This measure is known to be
determinantal, in the sense that the probability that $r$ given edges
$e_1,\dots,e_r$ belong to $\eta$ is given by the determinant of an
$r\times r$ matrix, whose entries are elements of the so-called
inverse Kasteleyn matrix.

 Define the Ronkin function associated to $P$ as
 \begin{eqnarray}
   \label{eq:Ronkin}
   R(B)= \frac{1}{(2\pi \mathrm{i})^2} \int \int_{\substack{ |z|=e^{B_1} \\ |w|=e^{B_2}}} \log |P(z,w)| \frac{dz}{z} \frac{dw}{w}   
 \end{eqnarray}
      for $B=(B_1,B_2)\in\mathbb R^2$.  From~\cite{KOS03}, $R$ is the
      Legendre transform of the so-called  surface tension $\sigma$ of the dimer
      model with the given periodic weights, i.e. 
      \begin{eqnarray}
        \label{eq:sigma}
	\sigma(\rho)=\sup_{B\in\mathbb R^2}(-R(B)+\rho\cdot B).        
      \end{eqnarray}
      We will recall later the relation between $\sigma$ and the
      ``limit shapes'' of the dimer model. 

      We write $ \stackrel\circ{N(P)}$, the interior of the Newton
      polygon, as the disjoint union of $\mathcal R$ (rough region)
      and $\mathcal S$ (smooth region), whose definition we recall
      now.  (Rough (resp. smooth) phases are called
      ``liquid'' (resp. ``gaseous'') phases in \cite{KOS03}.)
      From~\cite{KOS03}, it is known that if
      $\rho\in \stackrel\circ{N(P)}$, two cases can occur:
      \begin{itemize}
      \item 
        either the measure $\pi_{\rho,\mathtt w_0}$ is  \emph{rough}, meaning that
        height fluctuations of $h_\eta(f)-h_{\eta}(f')$ grow
        logarithmically w.r.t. the distance between the faces
        $f,f'$. More precisely,
        \begin{eqnarray}
          {\rm Var}_{\pi_{\rho,\mathtt w_0}}(h_\eta(f)-h_{\eta}(f'))\sim \frac1{\pi^2}\log |f-f'|
        \end{eqnarray}
        as the distance $|f-f'|$ between $f$ and $f'$ diverges. Moreover, the scaling limit of the height profile is a Gaussian Free Field \cite{KenLectures}. We call $\mathcal R$ the set of such ``rough slopes''; 
\item or the measure $\pi_{\rho,\mathtt w_0}$ is \emph{smooth},
        meaning that height flucutations of $h_\eta(f)-h_{\eta}(f')$
        have uniformly bounded variance. In this case, $\sigma$
        has a cone singularity (i.e. $\nabla\sigma$ is discontinuous) at this
		      value of $\rho$. The set of ``smooth slopes'' is denoted $\mathcal S$.       \end{itemize}
{In both cases, $\sigma(\cdot)$ is strictly convex.}

      From \cite{KOS03}, it is further known that $\mathcal S$ is a finite set and moreover
        \begin{eqnarray}
          \label{eq:rough}
\mathcal S\subset \Bigl[\stackrel\circ{N(P)}\cap \;\mathbb Z^2\Bigr];
        \end{eqnarray}       for
        generic edge weights $\mathtt w_0$,  $\mathcal S$ actually coincides with the whole  $\stackrel\circ{N(P)}\cap \;\mathbb Z^2$, which contains
        $2n(n-1)+1$ points. However, this may fail for particular
        choices of weights: for instance, when all edge-weights are
        equal, then it is known that $\mathcal S=\emptyset$.
        

\subsubsection{Shuffling algorithm with periodic weights}

A remarkable feature of
the shuffling algorithm is the following (see for instance
\cite[Proposition 3.1]{CT:18} and \cite[Prop. 2.2]{zhang2018domino}):
\begin{prop}
  \label{prop:misuramappata}
If the initial condition
$\eta_0$ at time $0$ is drawn from $\pi_{\rho,\mathtt w_0}$ (i.e., $\eta_0\sim \pi_{\rho,\mathtt{w}_0}$), then at time
$k$ one has $\eta_k\sim \pi_{\rho,\mathtt{w}_k}$.   
\end{prop}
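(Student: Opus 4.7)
The plan is to argue by induction on $k$ and reduce the proof to the one-step claim: if $\eta_k\sim\pi_{\rho,\mathtt{w}_k}$, then $\eta_{k+1}:=T_{k+1}(\eta_k)\sim\pi_{\rho,\mathtt{w}_{k+1}}$. By uniqueness, for slopes in $\stackrel\circ{N(P)}$, of the ergodic Gibbs measure of given slope with a given periodic weighting \cite{KOS03}, it is enough to check three properties of the law of $\eta_{k+1}$: translation invariance and ergodicity under shifts by multiples of $2n$; the DLR property with weights $\mathtt{w}_{k+1}$; and that its average slope equals $\rho$.

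Translation invariance under shifts of $2n$ is essentially built in: the weights $\mathtt{w}_k$ are $2n$-periodic (by induction), the local rule defining $T_{k+1}$ at each even face is always the same, and the creation-step probabilities $w^{a}_{i,j;k}w^{c}_{i,j;k}/\Delta_{i,j;k}$ and $w^{b}_{i,j;k}w^{d}_{i,j;k}/\Delta_{i,j;k}$ are then $2n$-periodic in $(i,j)$. Ergodicity of $\eta_{k+1}$ transfers from that of $\eta_k$ since $T_{k+1}$ only adds independent, local randomness at each even face.

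The heart of the argument is the DLR check. The key ingredient is the classical ``spider move'' or ``urban renewal'' identity (see e.g.\ \cite{Pro03,GK13}): for a single even face $f$ with coordinates $(i,j)$, the deletion--sliding--creation procedure defines a coupling of pre- and post-shuffle local configurations $(\eta|_{\partial f},T_{k+1}(\eta)|_{\partial f})$ such that the $\mathtt{w}_k$-weight of the pre-shuffle configuration times the transition probability of $T_{k+1}$ equals, up to the common normalization $\Delta_{i,j;k}$, the $\mathtt{w}_{k+1}$-weight of the post-shuffle configuration; the new-weight formula \eqref{eq:wktowk+1} is precisely what makes this identity close. Fixing a finite edge set $\Lambda$ and an outside configuration $\bar\eta$, one then applies the single-face identity face-by-face in a neighbourhood of $\Lambda$ and uses the DLR property of $\eta_k$ to read off the DLR property of $\eta_{k+1}$ with weights $\mathtt{w}_{k+1}$.

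Slope preservation follows from Definition~\ref{def:ho} together with \eqref{eq:ch2}: heights on odd faces are frozen, and the random increment at an even face depends only on the local dimer configuration and an independent coin whose bias is $2n$-periodic. By translation invariance, the expected height gap between corresponding faces of $D_{0,0}$ and $D_{1,0}$ (resp.\ $D_{0,1}$) is unchanged, so the slope remains $\rho$. The main obstacle I anticipate is lifting the spider-move identity from its finite-volume combinatorial form to an infinite-volume DLR statement; a clean way around this is to approximate $\pi_{\rho,\mathtt{w}_k}$ by Boltzmann measures on large $(2nN)\times(2nN)$ tori (where the identity is a finite combinatorial statement) and pass to $N\to\infty$, using convergence of local observables and the uniqueness of the infinite-volume ergodic Gibbs measure of given slope.
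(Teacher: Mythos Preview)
The paper does not give its own proof of this proposition; it simply cites \cite[Proposition~3.1]{CT:18} and \cite[Prop.~2.2]{zhang2018domino}. Your sketch is essentially the standard argument behind those references: the spider-move/urban-renewal identity is precisely what makes the conditional law transform from $\mathtt{w}_k$-Gibbs to $\mathtt{w}_{k+1}$-Gibbs, and the torus approximation plus uniqueness of the ergodic Gibbs measure of given slope is the usual route to the infinite-volume statement. Your slope-preservation argument is also the right one (and is in fact anticipated by the paper's remark, just before Definition~\ref{def:ho}, that height gradients between odd faces are exactly preserved under $T_{k+1}$; this already gives slope preservation without any averaging).

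One small caveat worth tightening: ergodicity does not automatically transfer under a map that adds independent local randomness unless you argue a bit more carefully (e.g.\ by noting that $T_{k+1}$ is a factor of the ergodic system $(\eta_k,\text{coins})$ under the $(2n,2n)$-shift, and factors of ergodic systems are ergodic). Alternatively, the torus route you propose sidesteps this entirely, since the limit is identified directly as $\pi_{\rho,\mathtt{w}_{k+1}}$ via the finite-volume spider-move identity and known convergence of torus Boltzmann measures to the unique Gibbs state of the target slope.
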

 If we had not swapped vertex colors at each step, the slope
$\rho$ would swap to $-\rho$ at each step.  There are two observations
that we will need going forward. The first one is that the
characteristic polynomial only changes by a multiplicative constant factor when the weights $\mathtt w_k$ are replaced by
$\mathtt w_{k+1}$~\cite{GK13}
  In particular, in view of \eqref{eq:Ronkin} and
\eqref{eq:sigma}, this implies that the surface tension
$\sigma(\cdot)$ for weights $\mathtt w_k$ equals that for weights
$\mathtt w_{k+1}$, up to an additive constant.  Another consequence is
the following: since the rough or smooth nature of
$\pi_{\rho,\mathtt w}$ depends on $\mathtt w$ only through the zeros
of the characteristic polynomial $P(z,w)$ on the torus
$\{z,w\in \mathbb C: |z|=|w|=1\}$ \cite{KOS03}, we deduce that
$\pi_{\rho,\mathtt w_{k+1}}$ is rough (resp. smooth) iff
$\pi_{\rho,\mathtt w_{k}}$ is. In other words, the condition
$\rho\in \mathcal R$ does not depend on $k$.

Another important observation is the following: even though weights
$\mathtt w_0$ (and therefore $\mathtt w_k$) are periodic \emph{in
  space}, the sequence $\{\mathtt w_k\}_{k\ge0}$ is in general
\emph{not periodic} w.r.t. the time index $k$.  Time-periodicity can,
however, hold for special choices of $\mathtt w_0$ and indeed the
cases studied in \cite{CT:18,zhang2018domino} are time-periodic.

\subsection{The Aztec diamond}

\label{sec:Azteco}
The Aztec diamond $A_N$ of size $N$ is the subset of the graph
$\mathbb{Z}^2$ whose vertices  have Cartesian coordinates
$(x_1,x_2)$ satisfying the condition $|x_1-1/2|+|x_2-1/2| \leq N$.
We let  $E^+_N$
denote the set of edges outgoing from $A_N$, $F^+_N$ the set of faces
not in $A_N$ but neighboring $A_N$ and $F_N$ the set of internal faces of $A_N$.
\begin{figure}
	\begin{center}
		\includegraphics[height=6cm]{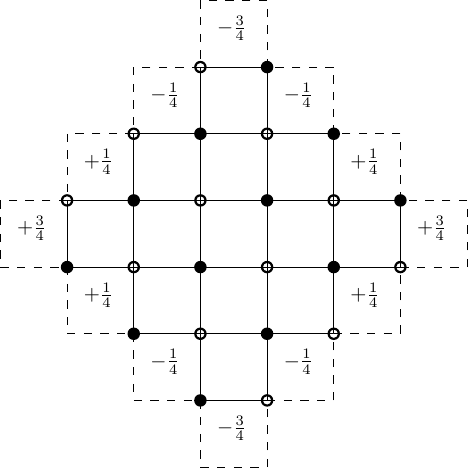}
		\caption{The Aztec diamond $A_3$ (full edges and colored vertices). The dashed edges are the boundary edges in $E^+_3$ and the faces containing dashed edges are the faces in $F^+_3$. The height function on $F^+_3$ is given.}
		\label{fig:Azteco}
	\end{center}
\end{figure}

Let
$\pi_{\mathtt w,N}$ be the probability measure on $\Omega_N$, the set
of perfect matchings of $A_N$, where the weight of a configuration is
proportional to the product of the $\mathtt w$-weights of edges occupied by
dimers. 
Since all vertices of $A_N$ are matched among themselves, all edges in
$E^+_N$ are empty and therefore the height difference between two
faces in $F^+_N$ is independent of the choice of $\eta\in\Omega_N$. We assume
that the coloring of the vertices is such that the vertex of
coordinates $(-N+1,1)$ is white. We fix the height offset as in Fig.
\ref{fig:Azteco}, by setting the height to $+N/4$ on the leftmost
face of $F^+_N$; then, the boundary height ranges from $-N/4$ to
$+N/4$.

The height function in $A_N$ satisfies a limit shape phenomenon (or
law of large numbers) as $N\to\infty$. Namely, rescale the lattice
mesh by $1/(2nN)$ and call $\hat A_N$ the rescaled Aztec diamond (and correspondingly denote $\hat E_N^+,\hat F_N^+,\hat F_N$ the analog of $E_N^+,F_N^+,F_N$).  The  union of the faces of $\hat A_N$ tends to the
square
\begin{eqnarray}
  \label{eq:Q}
Q=\{(x_1,x_2):|x_1|+|x_2|\le 1/(2n)\}.  
\end{eqnarray}
Define the rescaled height function $\hat h_\eta:\hat F_N\mapsto \mathbb R$ as
\[
\hat h_\eta(\hat f):=\frac1N h_\eta(f), 
\]
with $f\in F_N$ the face of $A_N$ that corresponds to $\hat f$ before
rescaling.  Thanks to the factor $2n$ in the rescaling, $\hat h_\eta$
is a Lipschitz function whose gradient is contained in the Newton
polygon $N(P)$. Note that, if $\hat f=\hat f_N$ is a face in $\hat F^+_N$ whose
center tends to $x=(x_1,x_2)\in \partial Q$ as $N\to\infty$, then
\begin{eqnarray}
  \label{eq:partialQ}
 \hat h_\eta(\hat f)\stackrel{N\to\infty}= \psi_{\partial Q}(x):=\frac n2\left({|x_1|-|x_2|}\right).  
\end{eqnarray}
 The limit shape theorem
(cf. \cite{CKP01} for the model with uniform weights and
\cite{kuchumov2017limit} for the general periodic case) states
that there exists a Lipschitz function $\psi_{\mathtt w}:Q\mapsto \mathbb R$
that coincides with $\psi_{\partial Q}$ on $\partial Q$, such that for
every $\delta>0$,
\begin{eqnarray}
  \label{eq:lln}
\lim_{N\to\infty}  \pi_{\mathtt w,N}\Bigl(\exists \hat f\in \hat F_N:| \hat h_\eta(\hat f)-\psi_{\mathtt w}(\hat f)|>\delta\Bigr)=0.
\end{eqnarray}
Here, with some abuse of notation, $\psi_{\mathtt w}(\hat f)$ means  $\psi_{\mathtt w}$
computed at the center of the face $\hat f$.
The ``limit shape'' $\psi_{\mathtt w}$ is characterized by being the unique minimizer of the
surface tension functional
\[
\int_Q \sigma(\nabla \psi) dx
\]
among Lipschitz functions that equal $\psi_{\partial Q}$ on the
boundary. While the boundary condition does not depend on $\mathtt w$,
the limit shape does (through the surface tension), but
$\psi_{\mathtt w_{k+1}}=\psi_{\mathtt w_{k}}$ because, as we already
mentioned, $\sigma$ changes only by an additive constant when
$\mathtt w_k$ is changed into $\mathtt w_{k+1}$.

\subsection{Statement of Main theorem} 
Our main result concerns the average speed of growth for the Markov
process in the infinite graph, started from
$\pi_{\rho,\mathtt{w}}$. By definition, this is given by the limit
(provided it exists)
\begin{eqnarray}
  \label{eq:prov}
  v_{\mathtt w}(\rho):=  \lim_{k\to\infty}\frac1k \sum_{j=1}^k\left(\mathbb E_{\pi_{\rho,\mathtt{w}}}(h_{\eta_j}(f))-\mathbb E_{\pi_{\rho,\mathtt{w}}}(h_{\eta_{j-1}}(f))\right)
\end{eqnarray}
with $f$ any face of $\mathbb Z^2$ and $\mathbb E_\nu$ the law of the
process started from the probability measure $\nu$. Note that every second term in the sum is zero because every second time the face $f$ is odd.

Since $\eta_j\sim \pi_{\rho,\mathtt w_j}$ with $\mathtt w_0\equiv \mathtt w$, each non-zero term in the
sum could in principle be computed via \eqref{eq:ch2} and Kasteleyn
theory, using the determinantal structure of the measure
$\pi_{\rho,\mathtt w_k}$.   Following this route, however, it is not
clear how to get any manageable expression or to prove that the limit
$k\to\infty$ in \eqref{eq:prov} even exists.  One reason is that, for generic periodic
weights, it is hard to invert the infinite-volume Kasteleyn matrix
explicitly. Fortunately, an alternative way exists, that leads to:
\begin{Theorem}
  \label{th:1}
	For every $\rho\in\stackrel\circ{N(P)}$ and positive periodic weighting $\mathtt w$, there exists $v=v_{\mathtt w}(\rho)$ such that, for any face $f\in \mathbb{Z}^2$,
  \begin{eqnarray}
    \label{eq:v}
    \lim_{k\to\infty}\frac1k\mathbb E_{\pi_{\rho,{\mathtt w}}} (h_{\eta_{k}}(f)-h_{\eta_{0}}(f))=v_{\mathtt w}(\rho). 
  \end{eqnarray}
  The speed $v_{\mathtt w}(\cdot)$ is determined as follows: let
  $\psi_{\mathtt w}(\cdot)$ be the limit shape for the dimer model in
  the Aztec diamond with weights $\mathtt w$ and let
  $x_{\mathtt w}(\rho)\in \stackrel\circ Q$ (the interior of the unit square in \eqref{eq:Q}) be a point such that
  $\nabla \psi_{\mathtt w}(x_{\mathtt w}(\rho))=\rho$. Then
  \begin{eqnarray}
\label{eq:chiev}
    v_{\mathtt w}(\rho)=\psi_{\mathtt w}(x_{\mathtt w}(\rho))-x_{\mathtt w}(\rho)\cdot \rho.
  \end{eqnarray}

  On the rough region $\mathcal R$, $v_{\mathtt w}(\cdot)$ is
  $C^\infty$and $\det(D^2 v_{\mathtt w})< 0$. On the other hand,
  $D v_{\mathtt w}$ is discontinuous at every $\rho\in \mathcal S$.
\end{Theorem}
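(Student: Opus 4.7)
My plan is to first establish the formula \eqref{eq:chiev}, which identifies $v_{\mathtt w}(\rho)$ as a (local) Legendre transform of the Aztec diamond limit shape $\psi_{\mathtt w}$, and then to deduce all the analytic properties of $v_{\mathtt w}$ by differentiating this identity and invoking known structural results on dimer limit shapes. The key input is the classical fact \cite{EKLP92,Pro03} that the shuffling algorithm, run for $N$ steps from the empty $A_0$, samples $\pi_{\mathtt w, N}$ on $A_N$; by \eqref{eq:lln}, the height at a face $f_N$ centered near $2nNx^*$, with $x^* := x_{\mathtt w}(\rho)$, equals $N\psi_{\mathtt w}(x^*)+o(N)$. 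On the other hand, near $f_N$ the local slope is $\nabla\psi_{\mathtt w}(x^*)=\rho$, so the infinite-volume stationary laws $\pi_{\rho,\mathtt w_k}$ govern the dynamics locally and contribute an average growth of $v_{\mathtt w}(\rho)$ per step to the height at $f_N$. Tracking the initial height at $f_N$ in the Aztec diamond setup---determined by the empty-exterior convention, which produces a linear function of position whose contribution equals $-Nx^*\cdot\rho+o(N)$ once the local slope $\rho$ is taken into account---and matching the two expressions for the total height change over $N$ steps produces \eqref{eq:chiev}. Existence of the limit in \eqref{eq:v} follows, because the right-hand side of \eqref{eq:chiev} does not depend on $k$: as noted after \eqref{eq:sigma}, $\psi_{\mathtt w_k}=\psi_{\mathtt w}$ for every $k$.

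Once \eqref{eq:chiev} is established, the analytic statements on $\mathcal R$ come from implicit differentiation. Since $\nabla\psi_{\mathtt w}(x_{\mathtt w}(\rho))=\rho$, one finds $\nabla_\rho v_{\mathtt w}(\rho)=-x_{\mathtt w}(\rho)$ and $D^2_\rho v_{\mathtt w}=-(D^2\psi_{\mathtt w}(x_{\mathtt w}(\rho)))^{-1}$, so $\det D^2 v_{\mathtt w}=1/\det D^2\psi_{\mathtt w}$, with the same sign. I would then invoke the Kenyon--Okounkov description of dimer limit shapes via the complex Burgers equation, together with the refined structural results of \cite{Duse,SS10} for general periodic weights, to conclude that on the preimage of $\mathcal R$ under $\nabla\psi_{\mathtt w}$, $\psi_{\mathtt w}$ is $C^\infty$, $\nabla\psi_{\mathtt w}$ is a local diffeomorphism onto $\mathcal R$, and $\det D^2\psi_{\mathtt w}<0$ (the limit shape is a saddle surface in rough regions). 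Pulling these facts through the identity above yields $v_{\mathtt w}\in C^\infty(\mathcal R)$ and $\det D^2 v_{\mathtt w}<0$.

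For the discontinuity of $\nabla v_{\mathtt w}$ at $\rho\in\mathcal S$, I would use the facet theorem of \cite{KOS03} and its refinements in \cite{Duse,SS10}: each smooth slope corresponds to a hole of the amoeba of $P(z,w)$ and generates a ``gaseous facet'' $F_\rho\subset Q$, not touching $\partial Q$, on which $\psi_{\mathtt w}$ is affine with gradient $\rho$. Choosing any $x_{\mathtt w}(\rho)\in F_\rho$, \eqref{eq:chiev} gives $v_{\mathtt w}(\rho)$ as the common constant value of $\psi_{\mathtt w}(x)-x\cdot\rho$ on $F_\rho$. For $\rho'\in\mathcal R$ approaching $\rho$, the unique $x_{\mathtt w}(\rho')$ lies just outside $F_\rho$; since $\nabla_\rho v_{\mathtt w}(\rho')=-x_{\mathtt w}(\rho')$, distinct directions of approach produce distinct accumulation points on $\partial F_\rho$, and hence distinct one-sided limits of $\nabla v_{\mathtt w}$ at $\rho$. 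The main technical obstacle is the derivation of \eqref{eq:chiev} itself: one must carefully justify the localization of the Aztec diamond dynamics near a face of local slope $\rho$ and precisely account for the height offset accumulated before that face enters the growing diamond. Once \eqref{eq:chiev} is in hand, the analytic consequences follow cleanly from Legendre duality and the cited structural results on dimer limit shapes.
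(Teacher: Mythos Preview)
Your sketch of \eqref{eq:chiev} diverges from the paper's derivation and, as written, does not close. Running the shuffle from $A_0$ for $N$ steps and matching the terminal height $N\psi_{\mathtt w}(x^*)$ against ``initial height plus $Nv_{\mathtt w}(\rho)$'' is circular (it presupposes the limit exists) and incorrect in detail: the local slope at $f_N$ is \emph{not} $\rho$ throughout those $N$ steps---it only becomes $\rho$ near the final time---so the per-step increment at $f_N$ is not governed by $\pi_{\rho,\mathtt w_k}$ for most of the evolution. The paper instead starts at time zero from $A_N$ already sampled from $\pi_{\mathtt w,N}$, runs only $k=\epsilon N$ steps, applies the limit-shape theorem at times $0$ and $k$ (the face $\bar f_N$ has rescaled position $x^*$ and $x^*/(1+\epsilon)$ respectively) to obtain directly the average increment $\psi_{\mathtt w}(x^*)-x^*\cdot\rho+o_\epsilon(1)$, and then transfers this to the infinite-volume process by a monotonicity-plus-locality sandwich (Propositions~\ref{prop:misuramappata} and~\ref{prop:loc}).

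The more serious gap is your route to $v_{\mathtt w}\in C^\infty(\mathcal R)$ and $\det D^2 v_{\mathtt w}<0$. You want to invoke that $\det D^2\psi_{\mathtt w}<0$ \emph{strictly} on the rough part of $Q$, so that $\nabla\psi_{\mathtt w}$ is a local diffeomorphism and $D^2 v_{\mathtt w}=-(D^2\psi_{\mathtt w})^{-1}$ makes sense. But this strict inequality is not supplied by \cite{KO07,SS10,Duse}: the Euler--Lagrange equation combined with strict convexity of $\sigma$ gives only $\det D^2\psi_{\mathtt w}\le 0$ (cf.\ \eqref{eq:contorto}), and the paper explicitly remarks (after Proposition~\ref{prop:vsmooth}) that the Hessian of a dimer limit shape can vanish at interior rough points---the honeycomb limit shape in a hexagon does so at its center. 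Without strictness, $D^2\psi_{\mathtt w}$ need not be invertible, $x_{\mathtt w}(\rho)$ need not be unique, and your implicit-differentiation step collapses. In the paper, strict negativity is the \emph{output}, not the input: the authors first prove $v_{\mathtt w}\in C^\infty(\mathcal R)$ by an entirely different method---writing each summand of \eqref{eq:cisiamoquasi} via inverse-Kasteleyn integrals \eqref{eq:skoda}, proving these are $C^\infty$ in the magnetic field $B$ (Lemma~\ref{lemma:Cinft}), and invoking the Goncharov--Kenyon integrability of the spider-move dynamics on face weights to get uniform-in-$j$ bounds (Lemma~\ref{lemma:GK})---and only then deduce from \eqref{eq:implicit} that $\det J(x_{\mathtt w}(\rho))\ne 0$.
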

A few comments are in order:
\begin{itemize}
\item the existence of $x_{\mathtt w}(\rho)$ is part of the statement. Uniqueness in general fails (the limit shape
$\psi_{\mathtt w}$ may have ``facets'', i.e. open regions where it is
affine) but for $\rho\in\mathcal R$, the point $x_{\mathtt w}(\rho)$ is unique (see Section \ref{sec:proprieta}).
\item Using smoothness of $v_{\mathtt w}(\cdot)$ on $\mathcal R$ and
  \eqref{eq:chiev}, one sees that
\begin{eqnarray}
  \label{eq:chieDv}
  D v_{\mathtt w}(\rho)=-x_{\mathtt w}(\rho).
\end{eqnarray}
Note that the r.h.s. of \eqref{eq:chiev} looks like (minus) the
Legendre transform of $\psi_{\mathtt w}$, except that there is no
infimum over $x$ and in fact neither $v_{\mathtt w}$ nor
$\psi_{\mathtt w}$ have any definite convexity.
\item It was observed in \cite{KO07} that the Euler-Lagrange equation
  satisfied by the limit shape $\psi_{\mathtt w}$ of dimer models can be written (in the ``rough region'' where the limit shape is $C^2$)
  in terms of a first-order PDE (``complex Burgers equation'') for a
  complex pair $(z,w)$ related by the relations $P(z,w)=0$ and
  $\pi\nabla \psi_{\mathtt w}=(-{\rm arg}(w),{\rm arg}(z))$. Locally, these
  relations give a bijection between $z$ and $\rho=\nabla\psi_{\mathtt w}$.  Then,
  using \cite[Sec. 3]{borodin2018two}, the above Theorem \ref{th:1}
  can be complemented by  the following  statement:
  \[\hat v_{\mathtt w}(z):=v_{\mathtt w}(\rho(z))\; \text{ is a harmonic
  function of } \; z.\]
\item For a special case of two-periodic weights ($n=1$), it was found via explicit computation in \cite[Th. 3.11]{CT:18} that the behavior of $v_{\mathtt w}$ near the unique gas slope $\rho=0$ is of the type
  \begin{eqnarray}
    v_{\mathtt w}(\rho)\stackrel{\rho\to0}=|\rho|f_1({\rm arg}(\rho))+|\rho|^3f_3({\rm arg}(\rho))+O(|\rho|^5).
  \end{eqnarray}
  The absence of the  $|\rho|^2$ term can be given an interesting interpretation. In fact, this is a simple consequence of  formula \eqref{eq:chiev} plus  the fact that, if $x$ approaches a point $x_0$ on the boundary of the ``facet'' where $\nabla\psi_{\mathtt w}\equiv 0$, then generically $\psi_{\mathtt w}(x)-\psi_{\mathtt w}(x_0)$ vanishes as $|x-x_0|^{3/2}$ \cite{KO07} (this behavior is referred to as ``Pokrovsky-Talapov law'' \cite{pokrovskii1980theory}).
\end{itemize}
\subsubsection{Fluctuations}
\label{sec:fluct}
 One can further prove that height fluctuations grow slowly (at most
 logarithmically) in time, as is typical for growth models in the AKPZ
 universality class. In fact, one has uniformly in $k\ge 1$
\begin{eqnarray}
\nonumber
  \mathbb P_{\pi_{\rho,\mathtt w}}(|h_{\eta_k}(f)-h_{\eta_0}(f) -\mathbb E_{\pi_{\rho,\mathtt w}} (h_{\eta_k}(f)-h_{\eta_0}(f))|\ge u g(k))\le \frac{c}{u^2}
\end{eqnarray}
for some constant $c$,
where $g(k)=\sqrt{\log(k+1)}$ if $\rho\in\mathcal R$ and $g(k)\equiv 1$ if $\rho\in\mathcal S$.  The proof of this fact works  the same as in \cite{CT:18}
 so we will not add details (the speed of convergence $O(u^{-2})$ was not explicitly stated in \cite{CT:18}, but it
 can be immediately extracted from the proof).
 Note in particular that
 \begin{equation}
   \label{eq:nip}
   \mathbb P_{\pi_{\rho,\mathtt w}}(|h_{\eta_k}(f)-h_{\eta_0}(f) -\mathbb E_{\pi_{\rho,\mathtt w}} (h_{\eta_k}(f)-h_{\eta_0}(f))|\ge \delta k)\le \frac{c [\log (k+1)]^2}{\delta^2k^2}
 \end{equation}
 and since the r.h.s. is summable in $k$, one can upgrade \eqref{eq:v} to
 the almost-sure convergence, with respect to the joint law of the initial condition and of the process,
 \begin{eqnarray}
   \label{eq:asc}
   \lim_{k\to\infty}\frac{h_{\eta_{k}}(f)-h_{\eta_{0}}(f)}k=v_{\mathtt w}(\rho). 
 \end{eqnarray}

\section{Identification of the speed of growth} \label{sec:Identification}
\label{sec:proofs}

In this section, we prove existence of the speed and  formula
\eqref{eq:chiev}.

\subsection{General properties of the dynamics}
\label{sec:Aztecodyn}

We need two general facts: the dynamics is monotone (it preserves
stochastic ordering among height profiles) and it is local
(information travels at most ballistically through the system).

Let us start with monotonicity. Given two dimer configurations
$\eta,\eta'$, we say that $h_\eta\preceq h_{\eta'}$ if
$h_\eta(f)\leq h_{\eta'}(f)$ for every face $f$. Given two initial
configurations $\eta_0,\eta'_0$, we can couple the two Markov chains
$\{\eta_k\}_{k\ge0}, \{\eta'_k\}_{k\ge0}$ in the following way
(\emph{global monotone coupling}): for any face $f$, if in both
configurations
$\eta_{k-1}|_{\partial f}=\eta'_{k-1}|_{\partial f}=\emptyset$, then
in the ``creation step'' of the shuffling map $T_k$ we choose the same
randomness to decide whether we add two vertical or two horizontal
dimers around $f$. Then, the following statement holds (it implies the
preservation of stochastic order mentioned above): if
$h_{\eta_0}\preceq h_{\eta'_0}$, then the same holds at all later
times $k$ \cite[Lemma 2.4]{zhang2018domino}.

As far as locality is concerned the point is that, by the definition
of the shuffling algorithm, the value of $h_{\eta_k}(f)$ is completely
determined by the height at time $k-1$ at the face $f$ and at its four
neighbors (this determines the dimer configuration $\eta_{k-1}$ on
$\partial f$), plus the randomness used to create parallel dimers at
$f$, if the face is even and $\eta_{k-1}|_{\partial f}=\emptyset
$. From this, it is immediate to deduce the following:
\begin{prop}
  \label{prop:loc}
  Let $\eta_0,\eta'_0$ be two dimer configurations whose height
  coincides on all faces at $\ell_1$-distance up to $N+1$ from a given
  face $f$. Couple the Markov chains started from $\eta_0,\eta'_0$ via
  the global monotone coupling. Then, $h_{\eta_k}(f)=h_{\eta'_k}(f)$
  for every $k\le N$.
\end{prop}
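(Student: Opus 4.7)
The plan is to prove the proposition by induction on $k$, establishing the stronger claim that, under the global monotone coupling, $h_{\eta_k}(g)=h_{\eta'_k}(g)$ for every face $g$ at $\ell_1$-distance at most $N+1-k$ from $f$, for all $0\le k\le N$. The base case $k=0$ is precisely the hypothesis, and at $k=N$ the conclusion asserts equality on the $\ell_1$-ball of radius $1$ around $f$, which contains $f$ itself, giving the proposition.

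For the inductive step, I fix a face $g$ within $\ell_1$-distance $N+1-k$ of $f$. Its four face-neighbors then lie within distance $N+2-k$ of $f$, so by the induction hypothesis applied at time $k-1$ the heights of $\eta_{k-1}$ and $\eta'_{k-1}$ agree at $g$ and at each of its four neighbors. Via the height-gradient relation \eqref{eq:hgrad}, these five values determine the dimer restriction $\eta_{k-1}|_{\partial g}$, so in particular $\eta_{k-1}|_{\partial g}=\eta'_{k-1}|_{\partial g}$.

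I then read off $h_{\eta_k}(g)$ and $h_{\eta'_k}(g)$ according to the parity of $g$ at time $k-1$. If $g$ is odd, Definition \ref{def:ho} gives $h_{\eta_k}(g)=h_{\eta_{k-1}}(g)$ and analogously for the primed chain, so equality is immediate. If $g$ is even at time $k-1$, formula \eqref{eq:ch2} expresses $h_{\eta_k}(g)-h_{\eta_{k-1}}(g)$ as a function of $\eta_{k-1}|_{\partial g}$ and $\eta_k|_{\partial g}$ alone. The deletion and sliding substeps of $T_k$ restricted to $\partial g$ are functions of $\eta_{k-1}|_{\partial g}$ alone, while the creation substep---needed exactly when $\eta_{k-1}|_{\partial g}=\emptyset$---uses a single Bernoulli random variable attached to the face $g$, which is shared between the two coupled chains. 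Consequently $\eta_k|_{\partial g}=\eta'_k|_{\partial g}$, and combined with the equality of heights at time $k-1$ this yields $h_{\eta_k}(g)=h_{\eta'_k}(g)$.

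The only subtle point, and the only one I expect to need explicit verification, is that the shuffling map really acts locally on $\partial g$ in this way: the deletion, sliding and creation substeps at an even face $g$ affect only dimers on $\partial g$ and involve only the randomness assigned to $g$ itself. This is transparent from the very definition of the dynamics and from Figures~\ref{fig:spiderconfigs} and~\ref{fig:spiderheights}, so no substantial obstacle is anticipated beyond carefully bookkeeping the one-face-per-step speed of information propagation in the $\ell_1$ face metric.
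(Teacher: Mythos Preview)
Your proof is correct and follows exactly the approach the paper indicates: the paper simply observes that $h_{\eta_k}(g)$ is determined by $h_{\eta_{k-1}}$ at $g$ and its four neighbors together with the randomness at $g$, and declares the proposition ``immediate to deduce''; your induction on $k$ with the shrinking $\ell_1$-ball is precisely the standard way to make this deduction explicit. The one point worth noting in your locality check is that when $g$ is even at time $k-1$ its four face-neighbors are odd, so no deletion/sliding/creation is applied at those faces and hence no dimer can enter or leave $\partial g$ except through the operation at $g$ itself --- this is what makes $\eta_k|_{\partial g}$ a function of $\eta_{k-1}|_{\partial g}$ and the coin at $g$ alone.
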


Let us also describe in some more detail how the shuffling algorithm
works on the Aztec diamond (this is the framework where the algorithm
was originally introduced \cite{EKLP92,Pro03}). In a step of the
algorithm, a dimer configuration $\eta$ on $A_N$ is mapped to a
configuration $\eta'$ on the larger domain $A_{N+1}$.  Suppose that we
have $\eta_{N}\in\Omega_{N}$, i.e. a dimer configuration on the
diamond of size $N$. We can also view $\eta_{N}$ as a subset of
edges of $A_{N+1}$ (but not a perfect matching, since the boundary
vertices are necessarily unmatched).  To construct $\eta_{N+1}$, apply the
map $T_{N+1}$ in $A_N$ to $\eta_{N}$ (with weights $\mathtt w_N$ as
above). Note that the faces in $A_{N+1}$ that are closest to the boundary,
i.e. the faces in $F^+_{N}$, are even. It is well known that the
resulting dimer configuration $\eta_{N+1}$ is a perfect matching of
$A_{N+1}$. Due to the swapping of colors, at the next step the faces in
$F^+_{N+1}$ are again even and the procedure goes on.  

The analog of Proposition \ref{prop:misuramappata} in the Aztec
diamond is the well known fact that, if we start at time zero with a
configuration $\eta_0$ on $A_N$ such that
$\eta_0\sim \pi_{\mathtt w_0,N}$ for certain periodic weights
$\mathtt w_0$, then at time $k$ one has
$\eta_k\sim \pi_{\mathtt w_{k},N+k}$.

There is an important point to be discussed: when we introduced the
shuffling algorithm on the infinite lattice, we fixed the evolution of
the height offset via Definition \ref{def:ho}. On the other hand, on
the Aztec diamond the height offset is fixed by the requirement that
the left-most face in $F^+_k$ has height $k/4$. These two conventions
must be compatible, i.e., if we adopt the convention \eqref{eq:ch2}
for the evolution of the height function, then the height on the
left-most face of $F^+_k$ must be $k/4$ deterministically. This is
easily seen inductively in $k$, as explained in the caption of Fig.
\ref{fig:compatibili}.
\begin{figure}
	\begin{center}
		\includegraphics[height=6cm]{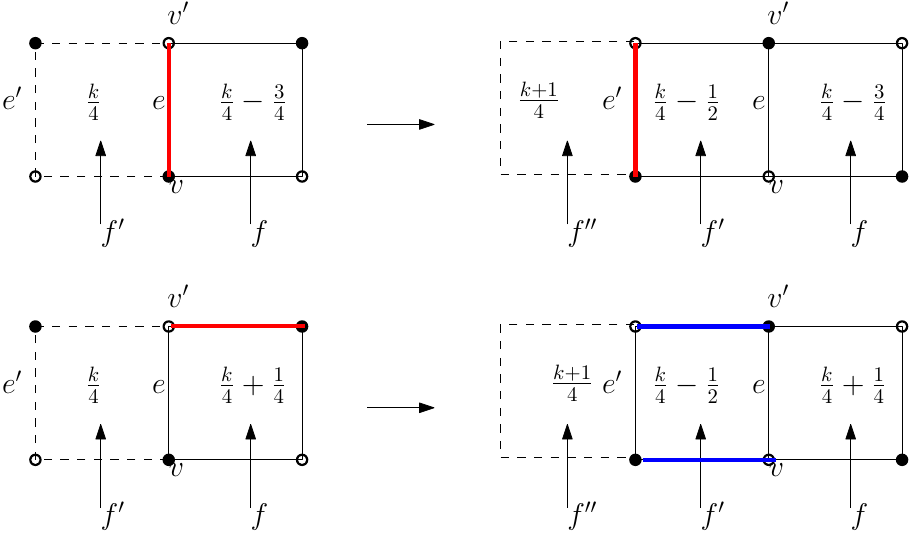}
		\caption{Let $f$ be the left-most face of $A_k$ and
                  $f'$ the left-most face of $F^+_k$, where 
                  (by inductive assumption) the height is equal to $k/4$. Suppose
                  (top drawing) that $v,v'$ are matched in
                  $\eta_k$. Then, in the application of $T_{k+1}$ the
                  red dimer slides to edge $e'$ and vertex colors are
                  swapped. Since $f$ is odd at time $k$, its height is
                  unchanged and as a consequence the height at $f''$
                  (the left-most face in $F^+_{k+1}$) is $(k+1)/4$ as
                  it should be. If instead $v'$ is not matched with $v$
                  (bottom drawing) then $\eta_k$ has no dimer on the
                  boundary of the even face $f'$. Then, in the
                  application of $T_{k+1}$, two parallel dimers
                  (horizontal and drawn in blue in the example of the picture) are
                  created at $f'$. Again, using that the height at the
                  odd face $f$ does not change, one sees that the
                  height at $f''$ is $(k+1)/4$. }
		\label{fig:compatibili}
	\end{center}
\end{figure}

\subsection{The speed of growth}
\label{sec:speedo}

Here we prove the following:
\begin{prop}
  Let $\rho\in \stackrel\circ{N(P)}$ and assume that there exists
  $x_{\mathtt w}(\rho)$ in the interior of $Q$, such that
  $\psi_{\mathtt w}(\cdot)$ is $C^1$ in a neighborhood of
  $x_{\mathtt w}(\rho)$ and
  $\nabla\psi_{\mathtt w}(x_{\mathtt w}(\rho))=\rho$. Then, the limit
  in \eqref{eq:v} exists and \eqref{eq:chiev} holds.
\end{prop}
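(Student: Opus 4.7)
The plan is to prove formula~\eqref{eq:chiev} by comparing the infinite-volume shuffling dynamics, started from $\pi_{\rho,\mathtt w}$, to the shuffling dynamics on a large Aztec diamond, whose macroscopic behaviour is governed by the limit shape $\psi_{\mathtt w}$. First I would fix a reference face $f\in \mathbb Z^2$ and, for each large $N$, translate the Aztec diamond $A_N$ (and the grown $A_{N+k}$ at time $k$) so that $f$ sits at the rescaled position $x_{\mathtt w}(\rho)\in \stackrel\circ Q$ in $\hat A_N$. Write $\tilde\eta_0^{(N)}\sim \pi_{\mathtt w_0,N}$ for the initial Aztec configuration and $\tilde\eta_k^{(N)}\sim \pi_{\mathtt w_k,N+k}$ for its evolution after $k$ shuffling steps.

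Next I would exploit the limit shape theorem~\eqref{eq:lln}, combined with the uniqueness of the ergodic Gibbs measure $\pi_{\rho,\mathtt w}$ of slope $\rho=\nabla\psi_{\mathtt w}(x_{\mathtt w}(\rho))$~\cite{KOS03}, to show that the restriction of $\tilde\eta_0^{(N)}$ to any fixed ball $B(f,R)$ converges in distribution to the restriction of $\eta_0\sim \pi_{\rho,\mathtt w}$ to $B(f,R)$ as $N\to\infty$. This local convergence lets me couple $\tilde\eta_0^{(N)}$ and $\eta_0$ so that they agree on $B(f,k+1)$ with probability $1-o_N(1)$. Running both dynamics under the global monotone coupling of Section~\ref{sec:Aztecodyn} and invoking Proposition~\ref{prop:loc}, the height increments at $f$ coincide over $k$ steps on this event; boundedness of per-step height changes then gives
\begin{equation*}
\mathbb E_{\pi_{\rho,\mathtt w}}\bigl[h_{\eta_k}(f)-h_{\eta_0}(f)\bigr] \;=\; \mathbb E\bigl[h_{\tilde\eta_k^{(N)}}(f)-h_{\tilde\eta_0^{(N)}}(f)\bigr] + o_N(1).
\end{equation*}

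I would then compute the Aztec-side expectation via the limit shape. At time $0$, $f$ is at rescaled position $\hat f_0=x_{\mathtt w}(\rho)$ in $\hat A_N$; at time $k$ its rescaled position in $\hat A_{N+k}$ is $\hat f_k=x_{\mathtt w}(\rho)\,N/(N+k)$. Using $\nabla\psi_{\mathtt w}(x_{\mathtt w}(\rho))=\rho$ and a first-order Taylor expansion,
\begin{equation*}
(N+k)\psi_{\mathtt w}(\hat f_k)-N\psi_{\mathtt w}(\hat f_0) \;=\; k\bigl[\psi_{\mathtt w}(x_{\mathtt w}(\rho))-x_{\mathtt w}(\rho)\cdot \rho\bigr] + O(k^2/N).
\end{equation*}
Choosing a joint limit in which $k\to\infty$ and $N=N(k)\to\infty$ with $k/N\to 0$ (say $N=k^2$) kills both the coupling error $o_N(1)$ and the Taylor remainder $O(k^2/N)$; dividing by $k$ then yields existence of the limit~\eqref{eq:v} and the identity~\eqref{eq:chiev}.

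The main obstacle is precisely in passing from~\eqref{eq:lln}, which only provides $o(N)$ control on the expected height in $L^1$, to the $o(k)$ control needed for the \emph{difference} of expected heights at times $0$ and $k$. This is what forces the joint limit $k/N\to 0$ rather than $N\to\infty$ with $k$ fixed: because the coupling is exact on a ball of radius $k+1$, the $o(N)$ limit-shape errors at the two times must cancel up to the coupling error, and a clean implementation exploits a quantitative form of local convergence of the Aztec Gibbs measure towards $\pi_{\rho,\mathtt w}$ (available via the determinantal structure and asymptotics of the inverse Kasteleyn kernel).
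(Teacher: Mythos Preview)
Your overall strategy (compare to the Aztec dynamics via locality) is the right one, but the scaling regime you choose creates a genuine gap. You take $k/N\to 0$ so that the Taylor remainder $O(k^2/N)$ vanishes; however, in that regime the limit shape theorem~\eqref{eq:lln} only gives you $\mathbb E[h_{\tilde\eta_0^{(N)}}(f)]=N\psi_{\mathtt w}(\hat f_0)+o(N)$ and likewise at time $k$, so the error on the \emph{difference} is $o(N)$, and $o(N)/k$ does not tend to zero when $k=o(N)$. Your last paragraph notices this and asserts that ``the $o(N)$ limit-shape errors at the two times must cancel up to the coupling error'', but the coupling has already been used to identify the Aztec height increment with the infinite-volume one---it gives you no independent handle on the Aztec expectation, so the argument is circular. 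To make your route work you would need height-expectation estimates on the Aztec diamond with error $o(k)$, i.e.\ far sharper than~\eqref{eq:lln}; this is not available without substantial extra input. Separately, the coupling step itself is delicate: local weak convergence gives agreement on a \emph{fixed} window, whereas you need exact agreement (hence total-variation closeness) on a window of radius $k+1\to\infty$, which requires quantitative kernel asymptotics you have not supplied.

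The paper avoids both problems by working in the opposite regime $k=\epsilon N$ and replacing exact coupling by \emph{monotonicity}. With $k$ and $N$ comparable, the limit-shape error $N\epsilon^2$ (from~\eqref{eq:lln} with $\delta=\epsilon^2$) divided by $k=N\epsilon$ is $\epsilon$, which is sent to zero at the very end. The comparison to the infinite-volume dynamics is then one-sided: one shifts the height offset of $\eta_0\sim\pi_{\rho,\mathtt w}$ so that, with high probability, $h_{\eta_0}\le h_{\tilde\eta_0}$ on the box $\Lambda_{N,\epsilon}$ of side $2k+1$ (using only the sublinear fluctuation bound~\eqref{eq:piatto} and $C^1$-ness of $\psi_{\mathtt w}$ near $x_{\mathtt w}(\rho)$), and then the global monotone coupling plus Proposition~\ref{prop:loc} yield $h_{\eta_k}(\bar f_N)\le h_{\tilde\eta_k}(\bar f_N)$. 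This gives an upper bound on the $\limsup$; the matching lower bound is obtained symmetrically. No exact coupling and no quantitative local convergence are needed.
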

 The existence of
$x_{\mathtt w}(\rho)$ for every $\rho$ in the interior of the Newton
polygon will be proved in the next section.
\begin{proof}
For an integer $N$, let $\bar f_N$ be a face of $A_N$ whose center is
at minimal distance from $(2nN) x_{\mathtt w}(\rho)$. One should think of
$N$ as being a large multiple of  $k$, the time in \eqref{eq:v}, with
$\epsilon:=k/N$ that will be sent to zero at the end. For later
convenience, we let $\Lambda_{N,\epsilon}$ be the square box of side
$2k+1$ centered at the face $\bar f_N$.  Recall that $A_N$ denotes the
$N\times N$ Aztec diamond and take the edge weights to be given by
$\mathtt w$.  We run the shuffling dynamics in the Aztec diamond,
starting at time zero with the domain $A_N$ and with an initial
condition $\tilde \eta_0$ sampled from $\pi_{\mathtt w,N}$. We denote
$\tilde \eta_k$ the configuration at time $k$, where the tilda is used
just to distinguish this from the evolution in the infinite graph. The
height function of $\tilde\eta_0$ is concentrated at the limit shape
$\psi_{\mathtt w}(\cdot)$. In particular, from \eqref{eq:lln} with
$\delta=\epsilon^2$ we have
\begin{eqnarray}
  \label{fatica}
  \pi_{\mathtt w,N} \left[\left|h_{\tilde \eta_0}(f)- N \psi_{\mathtt w}(f/(2nN))\right|\le N \epsilon^2 \text{ for every } f\in \Lambda_{N,\epsilon}\right]\stackrel{N\to \infty}\to1.
\end{eqnarray}
As before, we identify with some abuse of notation a
face $f$ with the point at its center.  As observed in Section
\ref{sec:Aztecodyn}, at time $k$, the configuration $\tilde \eta_k$ has
law $\pi_{\mathtt w_k,N+k}$ and we still have \eqref{fatica} with $N$
replaced by $N+k$.  Altogether, we see that
\begin{equation}
	\begin{split}
		&\mathbb E_{\pi_{\mathtt w,N}}\left[ \frac{ h_{\tilde \eta_k}(\bar f_N)-h_{\tilde \eta_0}(\bar f_N)}k\right]\\
		&=\frac Nk\left[(1+\epsilon)\psi_{\mathtt w}\left(\frac{x_{\mathtt w}(\rho)}{1+\epsilon}\right) - \psi_{\mathtt w}(x_{\mathtt w}(\rho))\right]+O\left( \epsilon^2 \frac Nk\right)\\
  \label{eq:perA}
  &= \psi_{\mathtt w}(x_{\mathtt w}(\rho))-x_{\mathtt w}(\rho)\cdot \rho+o_\epsilon(1)
	\end{split}
\end{equation}
where we used that $\nabla\psi_{\mathtt w}(x_{\mathtt w}(\rho))=\rho$
and the error term $o_\epsilon(1)$ vanishes as $\epsilon\to0$, since
the limit shape is $C^1$ around $x_{\mathtt w}(\rho)$. We also used
the fact that $|h_{\tilde \eta_0}|/N, |h_{\tilde \eta_k}|/N$ are
uniformly bounded for $k\le \epsilon N$, to deduce from \eqref{fatica}
a statement about their average.

Our goal now is to prove a statement analogous to \eqref{eq:perA} for
the dynamics $\{\eta_k\}_{k\ge0}$ on the infinite graph.  By
Proposition \ref{prop:loc}, the evolution of $h_{\eta_j}(\bar f_N),j\le k$ is
not influenced by the height function of $\eta_0$ outside $\Lambda_{N,\epsilon}$.
Recall that $\eta_0$ is sampled from the infinite-volume measure $\pi_{\rho,\mathtt w}$. Under
this probability measure, the height function is essentially linear, with slope $\rho$
	and sub-linear fluctuations.  More precisely, 
\begin{eqnarray}
  \label{eq:piatto}
  \pi_{\rho,\mathtt w}\left[\left|h_{\eta_0}(f)-h_{\eta_0}(\bar f_N)-\frac1{2n}\rho\cdot(f-\bar f_N)\right|\le N\epsilon^2 \;\forall\;f\in \Lambda_{N,\epsilon}\right]\stackrel{N\to\infty}\to1
\end{eqnarray}
where once more we have identified a face with its center and the
factor $1/(2n)$ is there because $\rho$ is the average height change
per fundamental domain. To get \eqref{eq:piatto}, observe first that \[\pi_{\rho,\mathtt w}[h_{\eta_0}(f)-h_{\eta_0}(\bar f_N)]=\frac1{2n}\rho\cdot(f-\bar f_N)+O(1),\] uniformly in $f\in\Lambda_{\epsilon,N}$ (the error term is there because $f$ is not necessarily an exact translation of $\bar f_N$ in a different fundamental domain). Also,   recall  that the fourth centered moment of $h_{\eta_0}(\bar f_N)-h_{\eta_0}(f)$ under $ \pi_{\rho,\mathtt w}$ grows at most like $(\log|\bar f_N-f|)^{2}$ for $|\bar f_N-f|$ large (see \cite[Section 4]{KOS03} for a $O(\log|\bar f_N-f|)$ bound on the variance; higher moments are treated analogously). Then,  a union bound over $f\in \Lambda_{N,\epsilon}$ and an application of Chebyshev's inequality leads to  \eqref{eq:piatto}.

Note that we have not yet specified the height offset
$h_{\eta_0}(\bar f_N)$ at time zero. We will fix it in such a way that,
with high probability  (w.h.p.) as $N\to\infty$,
\begin{eqnarray}
  \label{eq:then}
h_{\eta_0}(f)\le h_{\tilde \eta_0}(f) \quad \text{for every} \quad f\in \Lambda_{N,\epsilon}.  
\end{eqnarray}
  For this, note that \eqref{eq:piatto} implies that w.h.p.
  \begin{eqnarray}
    h_{\eta_0}(f)\le N\epsilon^2+h_{\eta_0}(\bar f_N)+\frac1{2n}\rho\cdot(f-\bar f_N) \quad \text{for every} \quad f\in \Lambda_{N,\epsilon}
  \end{eqnarray}
  while \eqref{fatica} and $C^1$ continuity of the limit shape implies that w.h.p.
  \begin{eqnarray}
    h_{\tilde \eta_0}(f)\ge N\psi_{\mathtt w}(x_{\mathtt w}(\rho))+\frac1{2n}\rho\cdot(f-\bar f_N)+R_{N,\epsilon}
  \end{eqnarray}
with $R_{N,\epsilon}/N\epsilon= o_\epsilon(1)$.
  Then, \eqref{eq:then} holds provided we choose
  \[
    h_{\eta_0}(\bar f_N)=N\psi_{\mathtt w}(x_{\mathtt w}(\rho))-N\epsilon^2-|R_{N,\epsilon}|.
  \]
  By monotonicity of the dynamics and Proposition \ref{prop:loc} we see that $h_{\eta_k}(\bar f_N)\le h_{\tilde\eta_k}(\bar f_N)$ and therefore, w.h.p.,
  \begin{eqnarray}
    \frac{h_{\eta_k}(\bar f_N)-h_{\eta_0}(\bar f_N)}k\le \frac1k\left(h_{\tilde\eta_k}(\bar f_N)-N\psi_{\mathtt w}(x_{\mathtt w}(\rho))+N\epsilon^2+|R_{N,\epsilon}|\right)
    \\
    \le \frac{h_{\tilde\eta_k}(\bar f_N)- h_{\tilde\eta_0}(\bar f_N)}k+o_\epsilon(1)
  \end{eqnarray}
  where we used \eqref{fatica} in the last step and
  $k=\epsilon N$. Note that $[h_{\eta_k}(f)-h_{\eta_0}(f)]/k$ is deterministically bounded by 
  $1$, so we can turn the statement w.h.p. into a
  statement in average and obtain that
  \begin{eqnarray}
    \label{quasi}
    \limsup_{k\to\infty}
    \mathbb E_{\pi_{\rho,\mathtt w}}\frac{h_{\eta_k}(\bar f_N)-h_{\eta_0}(\bar f_N)}k\le\psi_{\mathtt w}(x_{\mathtt w}(\rho))-x_{\mathtt w}(\rho)\cdot \rho+o_\epsilon(1)
  \end{eqnarray}
  where we used also \eqref{eq:perA}.  Note that the face $\bar f_N$
  depends on the time $k=N\epsilon$.  However, since the measure
  $\pi_{\rho,\mathtt w}$ is invariant by translations of multiples of
  $2n$ and the height function has bounded Lipschitz constant, we have
  \eqref{quasi} also for any fixed face $f$. Finally, we let
  $\epsilon\to0$.

  A lower bound is proven in the very same way and altogether the statements
  \eqref{eq:v} and \eqref{eq:chiev} follow.
\end{proof}

With similar arguments, we also obtain the following result, that will be useful later:
\begin{prop}
    \label{prop:damettere}
    If there exists $x$ in the interior of $Q$ such that
    $\psi_{\mathtt w}$ is $C^1$ in a neighborhood of $x$ and
	$\nabla\psi_{\mathtt w}(x)=\rho$ with $\rho=(\rho_1,\rho_2)$ at one of the four
    corners of the Newton polygon (i.e., $\rho=(\pm n,0)$ or
    $\rho=(0,\pm n)$) then
    \begin{eqnarray}
      \label{eq:ahah}
      \psi_{\mathtt w}(x)=\rho\cdot x+\frac1{4n}(|\rho_2|-|\rho_1|).
    \end{eqnarray}
  \end{prop}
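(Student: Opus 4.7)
By the fourfold rotational symmetry of $Q$ and $N(P)$ (rotations by $\pi/2$), I reduce to the case $\rho = (n, 0)$; the desired equality then reads $\psi_{\mathtt w}(x) = nx_1 - 1/4$, which matches the boundary value $\psi_{\partial Q}(y) = ny_1 - 1/4$ along the entire NE and SE edges of $\partial Q$, and in particular $\psi_{\partial Q}(x^*) = 1/4$ at the east corner $x^* = (1/(2n),0)$. I prove the two inequalities separately.

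For the \emph{lower bound} $\psi_{\mathtt w}(x) \ge nx_1 - 1/4$, I would use the $\ell^\infty$-Lipschitz estimate $|\psi_{\mathtt w}(y) - \psi_{\mathtt w}(y')| \le n\|y-y'\|_\infty$, which holds because $\nabla\psi_{\mathtt w} \in N(P) = \{(a,b):|a|+|b|\le n\}$, whose support function in direction $v$ is $n\|v\|_\infty$. Applied with $y = x$ and $y'$ chosen optimally on the NE edge of $\partial Q$ (where $\psi_{\partial Q}(y') = ny'_1 - 1/4$), the elementary optimization over $y'$ yields exactly $\psi_{\mathtt w}(x) \ge nx_1 - 1/4$; the optimum is at the $\ell^\infty$-foot of $x$ on the NE edge.

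The matching \emph{upper bound} $\psi_{\mathtt w}(x) \le nx_1 - 1/4$ is the subtle point: the Lipschitz estimate alone is not sharp, and one must exploit the extremality of $\rho = (n,0)$ as a vertex of $N(P)$ together with the $C^1$ hypothesis at $x$. The strategy is to show that $\psi_{\mathtt w}$ is affine with slope $(n,0)$ on an open ``frozen facet'' $F \ni x$ whose closure reaches the east side of $\partial Q$; matching the boundary value $ny_1 - 1/4$ on that side then forces $\psi_{\mathtt w}|_F(y) = ny_1 - 1/4$, and in particular gives the statement at $y = x$.

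The main obstacle is rigorously establishing the facet $F$ from the purely local-at-$x$ information that the hypothesis provides. For this I would invoke the structural description of limit shapes of periodic-weight dimer models from \cite{KO07, Duse, SS10}: for each vertex of $N(P)$, the set $\{y \in Q : \nabla\psi_{\mathtt w}(y) = \rho\}$ is either empty or an open polygonal region of affinity whose closure meets the corresponding corner region of $\partial Q$. The cleanest route to this fact uses the complex-Burgers/amoeba parameterization of $\nabla\psi_{\mathtt w}$ in \cite{KO07}, where the vertices of $N(P)$ correspond precisely to the degenerate points of the parameterization; once this structure is in hand, the boundary-matching on the NE/SE edges completes the proof.
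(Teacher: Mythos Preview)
Your approach is genuinely different from the paper's, and the difference is instructive, but your upper bound has a real gap.

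\textbf{What the paper does.} The paper's proof is entirely dynamical and self-contained. Fixing $\rho=(n,0)$, it computes the average height increment $\frac1k\mathbb E_{\pi_{\mathtt w,N}}[h_{\tilde\eta_k}(f)-h_{\tilde\eta_0}(f)]$, averaged over faces $f$ in a box near $(2nN)x$, in two ways. First, by the limit-shape expansion (as in the preceding proof of the speed formula), this equals $\psi_{\mathtt w}(x)-x\cdot\rho+o_\epsilon(1)$. Second, since $\nabla\psi_{\mathtt w}=(n,0)+o_\epsilon(1)$ near $x$ (by the $C^1$ hypothesis), the law of large numbers forces a proportion $1-o_\epsilon(1)$ of local dimers to be vertical with bottom white vertex; by the explicit height-update rule for the shuffling map, each such even face contributes $-1/2$ per step, so the average increment is $-1/4+o_\epsilon(1)$. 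Equating the two yields $\psi_{\mathtt w}(x)-x\cdot\rho=-1/4$. No structural facts about limit-shape facets are needed beyond the LLN itself.

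\textbf{Where your argument is incomplete.} Your lower bound is fine (and in fact holds for every $x\in Q$, independently of the $C^1$ hypothesis: it is just $\psi_{\mathtt w}\ge\psi^-$). The entire content of the proposition therefore sits in the upper bound, and here you rely on the assertion that the set $\{y:\nabla\psi_{\mathtt w}(y)=(n,0)\}$ is an open affine facet whose closure reaches the east side of $\partial Q$. You acknowledge this as ``the main obstacle'' and cite \cite{KO07,Duse,SS10} for it, but you do not point to a specific statement, and none of those references supplies this fact off-the-shelf in the form you need. Note also a potential circularity: in the paper's logical flow, Proposition~\ref{prop:damettere} is \emph{used} to establish that points of $\mathcal F_0$ never have gradient at a corner of $N(P)$ (proof of Proposition~\ref{prop:notempty}); inverting the dependence and deriving the proposition from facet geometry requires an independent proof of that geometry. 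Until that step is filled in, the upper bound is not established.

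\textbf{Comparison.} Your static route, if it could be completed, would be conceptually natural and would not appeal to the shuffling dynamics at all. The paper's dynamical route trades that generality for self-containedness: it uses only the limit-shape LLN and the explicit local update rule, and exploits precisely the feature that the extremal slope $(n,0)$ freezes the microscopic configuration, making the height increment computable by hand.
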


  \begin{proof}
    Assume to fix ideas that $\rho=(n,0)$. As above, let $\bar f_N$
    be the face of $A_N$ closest to $(2n N) x$, let $k=N\epsilon$ and
    $\Lambda_{N,\epsilon}$ be the square of side $2k+1$ centered
    around $\bar f_N$. One has, in analogy with \eqref{eq:perA} and with the same argument,
	  \begin{eqnarray}\label{eq:expand}
    \frac1{|\Lambda_{N,\epsilon}|}\sum_{f\in\Lambda_{N,\epsilon}}  \mathbb E_{\pi_{\mathtt w,N}}\left[ \frac{ h_{\tilde \eta_k}(f)-h_{\tilde \eta_0}(f)}k\right]=  \psi_{\mathtt w}(x)-x\cdot \rho+o_\epsilon(1).
    \end{eqnarray}

    On the other hand, let $F$ be the collection of faces in
    $\Lambda_{N,\epsilon}$ (there are approximately $4 k^2$ of them).
    Write $F=F^{(+,j)}\cup F^{(-,j)}$, where $F^{(+,j)}$ contains the
    faces that are even at time $j$ and $F^{(-,j)}$ all the others.
    Because of \eqref{eq:lln} and the fact that
    $\nabla\psiw=(n,0)+o_\epsilon(1)$ in an $\epsilon$-neighborhood of
    $x$, from the definition of height function we see that, with
    probability $1-o_\epsilon(1)$, a proportion $1-o_\epsilon(1)$ of
    the dimers of $\tilde \eta_0$ in $\Lambda_{N,\epsilon}$ occupy a
    vertical edge with bottom white vertex. The same holds for
    $\tilde \eta_j,j\le k$, because $\tilde \eta_j$ has the same limit
    shape as $\tilde \eta_0$.  Therefore, a proportion
    $1-o_\epsilon(1)$ of the faces in $F^{(+,j)}$ have a single
    vertical dimer of $\tilde \eta_j$ along their boundary.  From
    \eqref{eq:ch2} we see that each such even face contributes $-1/2$
    to the height change from time $j$ to $j+1$. Since
    $|F^{(+,j)}|/|F|=1/2+o_\epsilon(1)$, the
    l.h.s. of~\eqref{eq:expand} equals also $-1/4+o_\epsilon(1)$ and
    \eqref{eq:ahah} follows.
  \end{proof}

\subsection{The limit shape}
Here we give some analytic properties of the limit shape $\psi_{\mathtt w}(\cdot)$ and prove the existence of $x_{\mathtt w}(\rho)$:
\begin{Theorem}
  \label{prop:KO}
  There exists a non-empty,  open subset $\mathcal F$ of the
  rescaled Aztec diamond $Q$ (cf. \eqref{eq:Q}) where
  $\psi_{\mathtt w}(\cdot)$ is $C^1$ and the gradient
  $\nabla \psi_{\mathtt w}(\cdot)\in \stackrel \circ{ N(P)}$. 
  For every
  $\rho\in \stackrel\circ{N(P)}$, there exists
  $x_{\mathtt w}(\rho)\in \mathcal F$ such that
 \[\nabla \psi_{\mathtt w}(x_{\mathtt w}(\rho))=\rho.\]
\end{Theorem}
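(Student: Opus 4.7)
The plan is to combine the variational characterization of $\psiw$ (as the unique minimizer of $\int_Q \sigma(\nabla\psi)\,dx$ with $\psi|_{\partial Q}=\psi_{\partial Q}$) with the structural results on dimer-model limit shapes from \cite{KO07,Duse,SS10}, and with Proposition \ref{prop:damettere} applied at the four corners of $Q$.

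\textbf{Step 1 (regularity and openness).} Define
\[
\mathcal F := \{x \in \stackrel\circ Q : \psiw \text{ is } C^1 \text{ in a neighborhood of } x \text{ and } \nabla\psiw(x) \in \stackrel\circ{N(P)}\}.
\]
By the Kenyon--Okounkov theory \cite{KO07}, wherever $\nabla\psiw\in\mathcal R$ the surface tension $\sigma$ is real-analytic and strictly convex, the Euler--Lagrange equation for the above variational problem reduces to the complex Burgers equation, and its solutions are real-analytic. Hence $\mathcal F$ is open, $\psiw$ is $C^\infty$ on $\mathcal F$, and $\nabla\psiw\colon\mathcal F\to\stackrel\circ{N(P)}$ is continuous. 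It remains to prove that $\mathcal F\ne\emptyset$ and that $\nabla\psiw|_{\mathcal F}$ is surjective onto $\stackrel\circ{N(P)}$.

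\textbf{Step 2 (non-emptiness and corner behavior).} The tangential boundary data $\psi_{\partial Q}=(n/2)(|x_1|-|x_2|)$ constrains the inward gradient along each side of $Q$ to a specific edge of $N(P)$, and at each of the four corners of $Q$ to a specific corner of $N(P)$ (e.g., at the east corner $(1/(2n),0)$ the gradient must approach $(n,0)$). Proposition \ref{prop:damettere} then pins down $\psiw$ on any region where this corner slope is attained, giving a rigid affine template per corner of $Q$ (e.g.\ $\psiw(x)=n x_1-1/4$ in a would-be east frozen region). A direct check shows that the four rigid templates so produced cannot be glued across $Q$ into a single Lipschitz function with gradient staying on $\partial N(P)$ and boundary trace $\psi_{\partial Q}$. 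Combined with the known non-degeneracy of the liquid region for the Aztec diamond with periodic weights \cite{CKP01,kuchumov2017limit}, this forces $\mathcal F\ne\emptyset$.

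\textbf{Step 3 (surjectivity).} On each connected component of $\mathcal F$, the complex Burgers description of $\psiw$ in \cite{KO07} shows that $\nabla\psiw$ locally parametrizes the interior of the amoeba of the spectral curve of $P$, so $\nabla\psiw(\mathcal F)$ is open in $\stackrel\circ{N(P)}$. Combining this with the corner behavior from Step 2 (so that the image contains points arbitrarily close to each of the four corners of $N(P)$) and with a topological/degree argument on $\mathcal F$, one concludes $\nabla\psiw(\mathcal F)=\stackrel\circ{N(P)}$, which gives the existence of $x_{\mathtt w}(\rho)$ for every $\rho\in\stackrel\circ{N(P)}$. The main obstacle is precisely this Step 3: ensuring that the image of $\nabla\psiw$ does not miss interior slopes lying inside facets of $\psiw$ (corresponding to bounded holes of the amoeba, i.e.\ to smooth slopes $\rho\in\mathcal S$) requires a careful combination of the analyticity on $\mathcal F$ from \cite{KO07}, the qualitative results of \cite{Duse,SS10}, and the rigidity at the corners of $Q$ coming from Proposition \ref{prop:damettere}.
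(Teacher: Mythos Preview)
Your Step 3 is where the proposal has a real gap, and you essentially admit as much. Saying that $\nabla\psiw(\mathcal F)$ is open and contains points near each corner of $N(P)$, and then appealing to an unspecified ``topological/degree argument'', does not establish surjectivity: nothing you have written prevents the image from omitting a compact subset of $\mathcal R$ or all of a smooth slope. The paper closes this gap with three concrete ingredients you do not invoke. First, it quotes from \cite{Duse} that for the Aztec diamond (with its ``natural'' boundary data) the map $D:x\mapsto\nabla\psiw(x)$ from $\mathcal F_R$ to $\mathcal R$ is \emph{proper}. Second, it uses the Euler--Lagrange equation $\sigma_{11}\partial^2_{x_1}\psiw+2\sigma_{12}\partial^2_{x_1x_2}\psiw+\sigma_{22}\partial^2_{x_2}\psiw=0$ together with strict convexity of $\sigma$ on $\mathcal R$ to show that $\det J(x)\le 0$ everywhere on $\mathcal F_R$, with equality only where $J(x)=0$. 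Third, it applies a theorem of Nijenhuis--Richardson (a proper $C^1$ map whose Jacobian has constant sign and is not identically zero is onto) to conclude that $D$ is surjective onto $\mathcal R$. The smooth slopes $\rho\in\mathcal S$ are then handled by taking $\rho_i\in\mathcal R$ with $\rho_i\to\rho$: properness keeps $x_{\mathtt w}(\rho_i)$ inside a compact of $\mathcal F_0$, and continuity of $H(\nabla\psiw)$ from \cite{SS10} gives the limiting $x_{\mathtt w}(\rho)$. Properness is exactly the mechanism that rules out the failure mode you worry about (missing slopes inside facets); your sketch does not supply it or any substitute.

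Your Step 2 is also too sketchy to stand as a proof. The paper separates this into a competitor argument showing $\psiw\not\equiv\psi^\pm$ (hence $\mathcal F_0\ne\emptyset$), and then a more careful geometric argument (using Proposition \ref{prop:damettere} to rule out corner slopes inside $\mathcal F_0$, the concavity of the frozen-region boundaries from \cite{SS10}, and a tangential-derivative incompatibility along $\partial\mathcal F_0$) to exclude $\nabla\psiw$ being confined to a single boundary segment $\ell^{(i)}$ of $N(P)$. Your ``direct check'' that the four affine templates cannot be glued with gradient on $\partial N(P)$ is in the right spirit but needs to be made precise along these lines.
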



\begin{proof}[Proof of Theorem \ref{prop:KO}]  
  For $x=(x_1,x_2)\in Q$, let
  \begin{multline}
    \label{eq:minmax}
    \psi^-(x) =\max[\phi_W(x),\phi_E(x)]
    :=\max\left[-n x_1-\frac14, n{x_1}-\frac14\right]
    =n{|x_1|}-\frac14,\\
    \psi^+(x) =\min[\phi_S(x),\phi_N(x)]
    :=\min\left[n{x_2}+\frac14,-n{x_2}+\frac14\right]
    =-n{|x_2|}+\frac14
  \end{multline}
  and note that $\psi^-$ (resp. $\psi^+$) is the mimimal
  (resp. maximal) Lipschitz function with gradient in $N(P)$ that equals $\psi_{\partial Q}$
  on $\partial Q.$   
  We let 
  \begin{eqnarray}
    \label{eq:mathcalF}
    \mathcal F_0:=\{x\in Q: \psi_{\mathtt w}(x)\ne \psi^\pm(x)\}\subset \stackrel \circ Q.
  \end{eqnarray}
  It is easy to see  the following (the proof is given below):
  \begin{Lemma}
    \label{lemma:pro}
The set  $\mathcal F_0$ is non-empty.
\end{Lemma}
We need some regularity properties of the limit shape
$\psi_{\mathtt w}$, and for this we appeal to \cite{SS10,Duse}.  Let
us compactify the Newton polygon by introducing a continuous map
$H:N(P)\mapsto S^2$ (the two-dimensional sphere) in such a way that
$\partial N(P)$ is mapped to a point of $S^2$ while $H$ is
a homeomorphism between $\stackrel\circ{N(P)}$ and
$H(\stackrel\circ{N(P)})$.  Then one has:
\begin{prop}
\cite[Th. 4.1 and Th. 1.3]{SS10}
  \label{prop:compactify}
The map
$x\mapsto H(\nabla\psi_{\mathtt w}(x))$ is continuous in the interior
of $Q$.   Moreover,
$\psi_{\mathtt w}$ is
$C^1$ in  $\mathcal F_0$.
\end{prop}

Define further the open set
  \begin{eqnarray}
    \label{eq:F0}
    \mathcal F:
    =\{x\in \mathcal F_0:\nabla\psi_{\mathtt w}(x)\in \stackrel \circ{N(P)}\},
  \end{eqnarray}
  that is the one appearing in the statement of 
Theorem \ref{prop:KO}.
 Decompose $\mathcal F$ as the union of the open set
  \begin{eqnarray}
    \label{eq:Fr}
    \mathcal F_R:=\{x\in  \mathcal F: \nabla\psi_{\mathtt w}(x)\in \mathcal R\}
  \end{eqnarray}
  and the closed set
  \begin{eqnarray}
    \label{eq:Fs}
    \mathcal F_S:=\{x\in \mathcal F: \nabla\psi_{\mathtt w}(x)\in \mathcal S\}.
  \end{eqnarray}
\begin{prop}
    \label{prop:notempty}
  The set $\mathcal F_R$ is non-empty.    
\end{prop}
Let us assume for the moment  Proposition \ref{prop:notempty}   (the proof is given below) and let us proceed with the proof of Theorem \ref{prop:KO}.
In general, $ \mathcal F_S$ consists of a collection of disjoint,
simply connected sets (these were called ``bubbles'' in \cite{KO07});
on each bubble, the gradient $\nabla\psi_{\mathtt w}$ is constant and
belongs to one of the finitely many slopes in $\mathcal S$.  It is
also known \cite{morrey2009multiple} that, on $ \mathcal F_R$, the
limit shape $\psi_{\mathtt w}$ is not just $C^1$ but actually
$C^\infty$, since the surface tension $\sigma(\rho)$ is $C^\infty$ for
$\rho\in \mathcal R$.  Therefore, in particular, the map
$D:x\mapsto \nabla \psi_{\mathtt w}(x)$ is a $C^1$ map from
$\mathcal F_R$ to $\mathcal R$.  The next step requires the
following:
  \begin{Theorem}\cite{Duse}
    \label{prop:clearly}
The map $D:x\mapsto \nabla \psi_{\mathtt w}(x)$ is  a proper map\footnote{\label{foot:natural} Properness does not hold for general domains
    $Q$ and boundary values $\psi_{\partial Q}$. Theorem
    \ref{prop:clearly} holds for the Aztec diamond because in this
    case $Q$ is a convex polygonal domain with sides perpendicular to the
    sides of the Newton polygon, and $\psi_{\partial Q}$ in
    \eqref{eq:partialQ} is a ``natural boundary value'' for $Q$. The
    notion of ``natural boundary value'' is defined in~\cite{Duse} and
    it requires in particular that, if the side $\ell$ of $Q$ is
    perpendicular to the side $[p_i,p_{i+1}]$ of the Newton polygon
    with $p_i,p_{i+1}$ two of its adjacent corners, then the
    derivative of $\psi_{\partial Q}$ along $\ell$ equals
    $\langle t_\ell,p_i\rangle$ with $t_\ell$ the tangent vector to
    $\partial Q$ along $\ell$. } from
  $\mathcal F_R$ to $\mathcal R$ (i.e. the pre-image of
  every compact subset of $\mathcal R$ is compact). 
\end{Theorem}
  Let us prove   that the Jacobian $\det(J(x))$ of the map $D $ is everywhere
  non-positive  on $\mathcal F_R$ and not identically zero.
  The Jacobian matrix equals 
\begin{eqnarray}
  \label{eq:J}
  J(x)=
  \begin{bmatrix}
    \partial^2_{x_1}\psi_{\mathtt w}(x)& \partial^2_{x_1 x_2}\psi_{\mathtt w}(x)\\
    \partial^2_{x_1 x_2}\psi_{\mathtt w}(x)& \partial^2_{x_2}\psi_{\mathtt w}(x)
  \end{bmatrix}.
\end{eqnarray}
On the other hand, on $\mathcal F_R$, $\psi_{\mathtt w}(\cdot)$
satisfies the Euler-Lagrange equation
\begin{eqnarray}
  \label{eq:EL}
  \sigma_{11}\partial^2_{x_1}\psi_{\mathtt w}(x)+2  \sigma_{12}  \partial^2_{x_1x_2}\psi_{\mathtt w}(x)+
   \sigma_{22} \partial^2_{x_2}\psi_{\mathtt w}(x)=0,
\end{eqnarray}
with $\sigma_{ab}$ the derivative of $\sigma(\rho)$ w.r.t. the
arguments $\rho_a,\rho_b$, computed at
$\rho:=\nabla\psi_{\mathtt w}(x)\in\mathcal R$.  For $\rho\in \mathcal R$, the
matrix $\{\sigma_{ab}\}_{a,b=1,2}$ is strictly positive definite, in
particular $|\sigma_{12}|<\sqrt{\sigma_{11}\sigma_{22}}$.  From this, we deduce that
\begin{eqnarray}
  \label{eq:contorto}
  \det(J(x))\ge0\Rightarrow J(x)_{i,j}=0 \quad \text{for every} \quad 1\le i,j\le 2.
\end{eqnarray} In fact, assume first that $\partial^2_{x_1x_2}\psi_{\mathtt w}(x)=0$. Then, $\partial^2_{x_1}\psi_{\mathtt w}(x) \partial^2_{x_2}\psi_{\mathtt
  w}(x)\ge0$ (because $\det J(x)\ge0$) but on the other hand \eqref{eq:EL} reduces to 
\begin{eqnarray}
  \label{eq:ELbis}
  \sigma_{11}\partial^2_{x_1}\psi_{\mathtt w}(x)+
  \sigma_{22} \partial^2_{x_2}\psi_{\mathtt w}(x)=0.
\end{eqnarray}
Since both $\sigma_{11},\sigma_{22}$ are strictly positive, the only
possibility is that
$\partial^2_{x_1}\psi_{\mathtt w}(x)= \partial^2_{x_2}\psi_{\mathtt
  w}(x)=0$.  On the other hand, assume (by contradiction) that
$\partial^2_{x_1x_2}\psi_{\mathtt w}(x)\ne0$, so that
$\partial^2_{x_1}\psi_{\mathtt w}(x) \partial^2_{x_2}\psi_{\mathtt
  w}(x)>0$. Then
\begin{eqnarray}
  \label{eq:contr}
  0\ge \sigma_{11}\partial^2_{x_1}\psi_{\mathtt w}(x)+\sigma_{22}\partial^2_{x_2}\psi_{\mathtt w}(x)-2 |\sigma_{12}|\sqrt{\partial^2_{x_1}\psi_{\mathtt w}(x)\partial^2_{x_2}\psi_{\mathtt w}(x)}\\
  > \sigma_{11}\partial^2_{x_1}\psi_{\mathtt w}(x)+\sigma_{22}\partial^2_{x_2}\psi_{\mathtt w}(x)-2 \sqrt{\sigma_{11}\sigma_{22}}\sqrt{\partial^2_{x_1}\psi_{\mathtt w}(x)\partial^2_{x_2}\psi_{\mathtt w}(x)}\ge0
\end{eqnarray}
which is a contradiction because the second inequality is
strict. Altogether, \eqref{eq:contorto} follows.  From this, we see
that $\det(J(\cdot))$ can vanish identically on $\mathcal F_R$ only if
$\psi_{\mathtt w}(\cdot)$ is affine, which is clearly not possible
in view of Proposition \ref{prop:clearly}.

We have that the map $D$ is proper and its Jacobian is non-negative
and not identically vanishing.  Then, by
\cite[Th. 1]{nijenhuis1962theorem}, we deduce that the map $D$ is
onto: for every $\rho\in \mathcal R$, there exists
$x_{\mathtt w}(\rho)\in \mathcal F_R$ with
$\nabla \psi_{\mathtt w}(x_{\mathtt w}(\rho))=\rho$.


It remains to show the existence of $x_{\mathtt w}(\rho)$ for every
$\rho\in \mathcal S$. Let $\{\rho_i\}$ be a sequence of slopes in
$\mathcal R$ that converges to $\rho$. Any limit point $\bar x$ of
$x_{\mathtt w}(\rho_i)$ is in $\mathcal F_0$ (because of Proposition \ref{prop:clearly}). Due to Proposition
\ref{prop:compactify}, the slope of $\psiw$ at $\bar x$ is
$\rho$, so we can set $x_{\mathtt w}(\rho):=\bar x$.
\end{proof}
We conclude this section by proving the two technical results, Lemma \ref{lemma:pro} and Proposition \ref{prop:notempty} that were stated above.

\begin{proof} [Proof of Lemma \ref{lemma:pro}]
Since   $\psi^-(x)<\psi^+(x)$ for every $x$ in the interior of $Q$ and $\psi_{\mathtt w}$ is continuous, we have
  just to exclude that $\psi_{\mathtt w}\equiv \psi^-$ or
  $\psi_{\mathtt w}\equiv \psi^+$.  Assume for instance that
  $\psi_{\mathtt w}\equiv \psi^+$; we are going to exhibit a function
  $\psi$, with the right boundary value, such that
\begin{eqnarray}
  \label{eq:atr}
  \int_Q \sigma(\nabla\psi)dx<\int_Q\sigma(\nabla\psi_{\mathtt w})d x=\frac{|Q|}2(\sigma(0,n)+\sigma(0,- n)).
\end{eqnarray}
For this purpose, let for $\epsilon>0$ small
\[
\psi(x):=\min(\psi^+(x),4 \epsilon n^2 x_1^2+(1/4-\epsilon)).
\]
It is immediate to see that $\psi(x)=4\epsilon n^2 x_1^2+(1/4-\epsilon)$ in
	\[S_\epsilon:=\{x:|x_2|\le \frac{\epsilon}{n} (1-4n^2 x_1^2)\}
\]
and $\psi(x)=\psi^+(x)$ in $Q\setminus S_\epsilon$, so in particular $\psi$ equals $\psi_{\partial Q}$  on $\partial Q$.
The difference between the r.h.s. and the l.h.s. of \eqref{eq:atr} is then
\begin{eqnarray}
  \label{eq:diff}
\int_{S_\epsilon}\left[ \frac12(\sigma(0,n)+\sigma(0,-n))-\sigma(8 \epsilon n^2 x_1,0)\right]dx.
\end{eqnarray}
Since $\sigma(\cdot)$ is strictly convex, one has
\[\frac12[\sigma(0,n)+\sigma(0,-n)]>\sigma(0,0).\]
Therefore, using continuity of $\sigma(\cdot)$, for $\epsilon$ small
enough the difference \eqref{eq:diff} is strictly positive and, as a
consequence, the minimizer $\psi_{\mathtt w}$ of the surface tension
functional cannot coincide with $\psi^+$.
\end{proof}

\begin{proof}[Proof of Propoposition \ref{prop:notempty}]
  We begin by making an observation on the shape of $\mathcal F_0$.
    Recall from \eqref{eq:minmax} the definition of $\phi_{a},a\in \{N,E, S, W\}$ and define the (possibly empty) regions
\begin{eqnarray}
    \label{eq:regcard}
    Q_{a}:=\{x\in \stackrel \circ Q: \psi_{\mathtt w}(x)=\phi_a(x)\}, \quad a\in \{N,E,S,W\}.
\end{eqnarray}
$Q_N$ belongs to the triangle $\{x\in Q: x_2\ge0\}$, otherwise
$\psi_{\mathtt w}$ would exceed the maximal function $\psi^+$; similar
statements hold for $Q_S,Q_E,Q_W$.  See Fig. \ref{fig:F}.
Also, it follows from \cite[Th. 4.2]{SS10} 
that $\partial Q_N\cap \stackrel\circ Q$ is the graph of a concave
function; analogously, $\partial Q_a\cap \stackrel\circ Q$ for
$a\in\{E,S,W\}$ is the graph of a concave function in a reference frame
rotated clockwise by $\pi/4,\pi/2$ and $3\pi/4$ respectively.  
\begin{figure}
	\begin{center}
		\includegraphics[height=6cm]{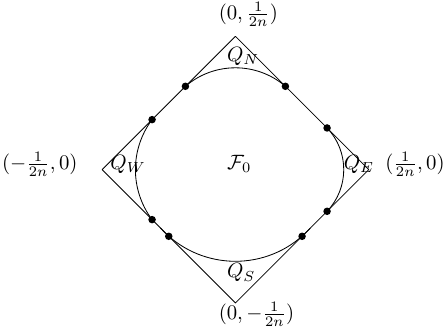}
		\caption{The square $Q$ with the convex region $\mathcal F_0$ and the four frozen regions $Q_a,a=N,W,S,E$.
               }
		\label{fig:F}
	\end{center}
\end{figure}
Because of the definition of $\psi^\pm$, we see that
\[
\mathcal F_0=\stackrel\circ Q\setminus \cup_{a\in\{N,E,S,W\}}Q_a.
  \]
Note that $\mathcal F_0$ is convex.


Before proving that $\mathcal F_R$ is non-empty, let us show that
$\mathcal F$ is non-empty. Let $\rho^{(a)},a\in\{N,E,S,W\}$ be the the
gradient of $\phi_a(\cdot)$ (these are also the four corners of
$N(P)$) and $\ell^{(i)},i\in\{NE,SE,SW,NW\}$ the open segment 
connecting $\rho^{(N)}$ to $\rho^{(E)}$ etc. 
  Remark  that if $x\in \mathcal F_0$, then
$\nabla\psi_{\mathtt w}(x)$ cannot coincide with any of the slopes
$\rho^{(a)}, a\in\{N,E,S,W\}$. In fact, thanks to Proposition
\ref{prop:damettere}, in this case one would have
  \[
\psi_{\mathtt w}(x)=\rho^{(a)}\cdot x+\frac1{4n}(|\rho^{(a)}_2|-|\rho^{(a)}_1|)=\phi_a(x),
\]
which contradicts the fact that $x\in\mathcal F_0$.
    Therefore,
    we have that
    \begin{eqnarray}
      \label{eq:FF0}
      \mathcal F=\mathcal F_0\setminus \cup_{i\in\{NE,SE,SW,NW\}} \mathcal F^{(i)},
    \end{eqnarray}
    with
  \[
    \mathcal F^{(i)}=\{x\in \mathcal F_0:\nabla\psi_{\mathtt w}(x)\in \ell^{(i)}\}.
  \]
  In general, the region $\mathcal F$ is a proper subset of $\mathcal F_0$, see Fig. \ref{fig:semifrozen}.
      \begin{figure}
	\begin{center}
		\includegraphics[height=6cm]{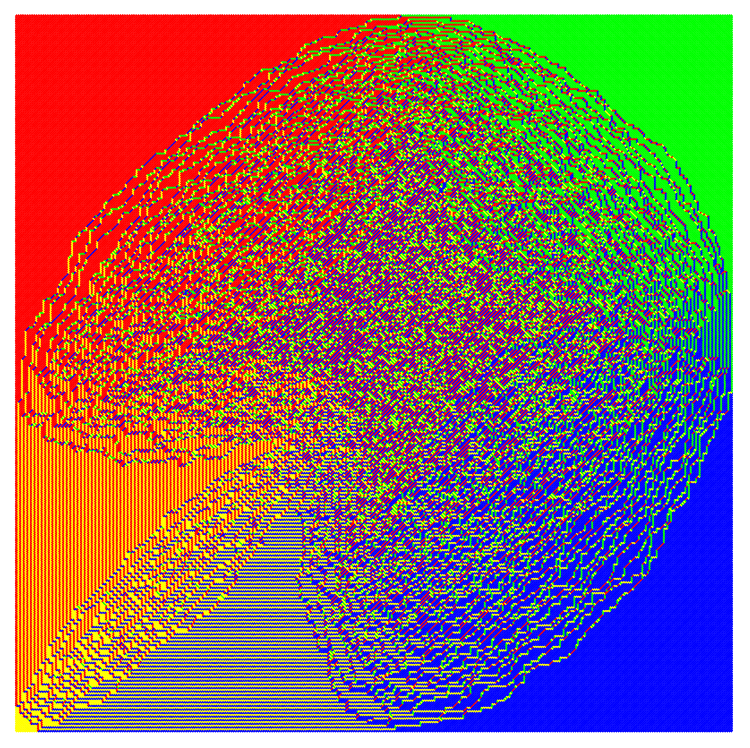}
		\caption{A random domino tiling of the Aztec diamond
                  of size $N=800$ (rotated by $45$ degrees) with edge
                  weights of period $n=2$ (the weights were randomly chosen on the fundamental domain and then extended by periodicity). The configuration is
                  obtained via the shuffling algorithm and it is
                  therefore a perfect sample from $\pi_{\mathtt
                    w,N}$. In addition to the frozen regions
                  $Q_N,Q_E,Q_S,Q_W$ adjacent to the corners of the
                  domain, where the gradient of the limit shape
                  $\psi_{\mathtt w}$ equals $(\pm n,0),(0,\pm n)$, one
                  remarks the presence of regions, adjacent to the
                  sides, where $\nabla\psi_{\mathtt w}$ belongs to  $\partial N(P)\setminus\{(\pm n,0),(0,\pm
                  n)\}$. These regions belong to $\mathcal F_0$ but
                  not to $\mathcal F$.  }
		\label{fig:semifrozen}
	\end{center}
\end{figure}

Using also the second statement in Proposition \ref{prop:compactify},
we conclude that if (by contradiction) $\mathcal F$ is empty, then
necessarily $\mathcal F_0$ must coincide with one of the four sets
$\mathcal F^{(i)}$. To fix ideas, say that
$\mathcal F_0=\mathcal F^{(NW)}$, i.e. everywhere in $\mathcal F_0$,
$\nabla\psi_{\mathtt w}$ is a non-trivial convex combination of
$\rho^{(W)}=(-n,0)$ and $\rho^{(N)}=(0,-n)$. Let $\gamma$ be the curve
along $\partial \mathcal F_0$ from point $A$ to point $B$, as in
Fig. \ref{fig:F2}, and let $t_p$ be the tangent vector at a point
$p\in\gamma$.
    \begin{figure}
	\begin{center}
		\includegraphics[height=6cm]{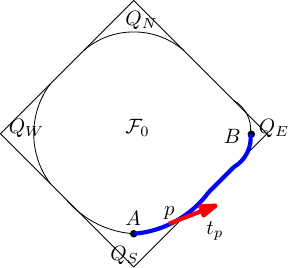}
		\caption{The curve $\gamma$ (in blue) and the tangent vector
                  ${t}_p$ at a point $p\in\gamma$.}
		\label{fig:F2}
	\end{center}
\end{figure}
From the definition of $Q_S,Q_W$ one has that the directional
derivative of $\psiw$ in direction $t_p$ equals $t_p\cdot g_p$, with
$g_p\in \ell^{(SE)}$. On the other hand, if $\gamma'$ is a curve from
$A$ to $B$ that runs slightly inside $\mathcal F_0$ at distance
$\delta$ from $\gamma$, we have that the directional derivative along
$\gamma'$ at a point $p'$ equals $t'_{p'}\cdot \nabla\psiw(p')$, with
$\nabla \psiw(p')\in \ell^{(NW)}$, because
$\mathcal F_0=\mathcal F^{(NW)}$ by assumption. Taking $\delta\to0$,
one easily sees that these two facts are not compatible with $\psiw$ being
continuous along $\gamma$. This proves that $\mathcal F$ is not empty.

Finally, the fact that $\mathcal F_R\ne\emptyset$ follows easily from
$\mathcal F\ne\emptyset$. In fact, if $\mathcal F_R$ were empty, then
$\nabla \psi_{\mathtt w}(x)$ would belong to $\mathcal S$ for every
$x\in \mathcal F$ and (because of Proposition \ref{prop:compactify})
it would actually take a constant value $\bar\rho$ on $\mathcal F$. If
$\mathcal F=Q$, this is a contradiction since the affine function with
slope $\bar \rho$  cannot match the boundary datum
$\psi_{\partial Q}$. If on the other hand
$Q\setminus \mathcal F\ne\emptyset$, then take a sequence of points
$x_i\in \mathcal F$ and a sequence $y_i \in Q\setminus \mathcal F$ that have the same limit in the interior of $Q$. One
has $\nabla\psiw(x_i)=\bar \rho$ while $\nabla\psi(y_i)\in \partial N(P)$, which contradicts Proposition \ref{prop:compactify}.
\end{proof}

\section{Properties of $v_{\mathtt w}(\rho)$} \label{sec:Properties}
\label{sec:proprieta}
We start with the following statement, whose proof is given below:
\begin{prop}
  \label{prop:vsmooth}
  The function $\rho\mapsto v_{\mathtt w}(\rho)$ is $C^\infty$ on $\mathcal R$.
\end{prop}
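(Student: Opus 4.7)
The plan is to start from formula \eqref{eq:chiev}, namely $v_{\mathtt w}(\rho) = \psi_{\mathtt w}(x_{\mathtt w}(\rho)) - x_{\mathtt w}(\rho)\cdot \rho$, and deduce smoothness of $v_{\mathtt w}$ from smoothness of $\psi_{\mathtt w}$ on $\mathcal F_R$ (already known from the appeal to \cite{morrey2009multiple} in the proof of Theorem \ref{prop:KO}) together with smoothness of the map $\rho \mapsto x_{\mathtt w}(\rho)$. The latter in turn will come from the inverse function theorem applied to the gradient map $D: \mathcal F_R \to \mathcal R$ defined by $D(x) = \nabla\psi_{\mathtt w}(x)$.

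The first step is to verify that $D$ is a local $C^\infty$ diffeomorphism at every point of $\mathcal F_R$ lying over $\mathcal R$. The Jacobian is $J(x) = D^2 \psi_{\mathtt w}(x)$ from \eqref{eq:J}. From the rigidity established in \eqref{eq:contorto} one has $\det J(x) \le 0$ on $\mathcal F_R$, with equality forcing $J(x) \equiv 0$. To rule out the degenerate case pointwise, I would argue as follows: the Ronkin function is real analytic on the interior of the Newton polygon, so its Legendre transform $\sigma$ is real analytic on $\mathcal R$; the Euler-Lagrange equation \eqref{eq:EL} is therefore an analytic strictly elliptic second-order PDE, and by analytic elliptic regularity $\psi_{\mathtt w}$ is real analytic on $\mathcal F_R$. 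Hence $\det J$ is a real analytic function, and by the argument already given in the proof of Theorem \ref{prop:KO} it cannot vanish identically; combined with properness of $D$ (Theorem \ref{prop:clearly}) and the surjectivity obtained via \cite{nijenhuis1962theorem}, a standard degree/covering argument shows that every preimage $x_{\mathtt w}(\rho)$ of a point $\rho\in\mathcal R$ must lie in the open set $\{\det J < 0\}$, for otherwise a small perturbation of $\rho$ would fail to have a nearby preimage, contradicting the open mapping property. In particular, this also gives uniqueness of $x_{\mathtt w}(\rho)$ for $\rho\in\mathcal R$.

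Once these properties of $D$ are in place, the inverse function theorem provides a $C^\infty$ (in fact real analytic) local inverse $\rho \mapsto x_{\mathtt w}(\rho)$ in a neighborhood of each $\rho_0 \in \mathcal R$. Substituting into \eqref{eq:chiev} gives $v_{\mathtt w} \in C^\infty(\mathcal R)$; a direct differentiation, using $\nabla\psi_{\mathtt w}(x_{\mathtt w}(\rho)) = \rho$ to cancel the two terms involving $D x_{\mathtt w}$, yields the formula $\nabla v_{\mathtt w}(\rho) = -x_{\mathtt w}(\rho)$ already anticipated in \eqref{eq:chieDv}. The main obstacle in this plan is the pointwise non-degeneracy of $J$ on the relevant preimages; an alternative route, which might even be cleaner, would be to appeal to the Kenyon-Okounkov complex Burgers description mentioned in the comments after Theorem \ref{th:1}, where the local relation $\pi \nabla\psi_{\mathtt w} = (-\arg w, \arg z)$ with $P(z,w)=0$ provides a local holomorphic reparameterization that makes the smoothness and local injectivity of $D$ on $\mathcal F_R$ transparent.
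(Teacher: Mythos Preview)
Your approach is genuinely different from the paper's, and the difference is not cosmetic: the paper explicitly avoids the route you take. In fact the Remark immediately following the statement of Proposition~\ref{prop:vsmooth} flags precisely the obstacle in your argument. The paper does \emph{not} attempt to show directly that $\det J(x)<0$ on $\mathcal F_R$; instead it goes back to the definition \eqref{eq:prov} of $v_{\mathtt w}$ as a Ces\`aro limit of expectations $\pi_{\rho,\mathtt w_j}[H(\eta)-V(\eta)]$, expresses each term through the inverse Kasteleyn matrix \eqref{eq:Ksol}--\eqref{eq:skoda}, and proves two lemmas: that these expressions are $C^\infty$ in the magnetic field $B$ (Lemma~\ref{lemma:Cinft}, via a careful local analysis near the two simple zeros of $P$ on the torus), and that all $B$-derivatives are bounded uniformly in the time index $j$ (Lemma~\ref{lemma:GK}, using the quasi-periodicity of face weights under the Goncharov--Kenyon dynamics). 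Smoothness of $v_{\mathtt w}$ in $\rho$ then follows by the diffeomorphism $\rho\leftrightarrow B$ on $\mathcal R$. Only \emph{after} this is established does the paper deduce from \eqref{eq:implicit} that $\det J<0$ on $\mathcal F_R$; this is an output, not an input.

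The gap in your argument is the step where you claim that ``a standard degree/covering argument shows that every preimage $x_{\mathtt w}(\rho)$ must lie in $\{\det J<0\}$, for otherwise a small perturbation of $\rho$ would fail to have a nearby preimage.'' This does not follow. At a point $x_0$ where $\det J(x_0)=0$ (equivalently, by \eqref{eq:contorto}, where the full Hessian of $\psi_{\mathtt w}$ vanishes), the map $D$ can perfectly well remain locally surjective, or even be a local homeomorphism that is not a local diffeomorphism; nothing you have said rules this out. Real analyticity of $\det J$ tells you its zero set is thin, but not that it is empty, and the paper's Remark gives an explicit example (the uniform honeycomb model on the hexagon) where the Hessian of the limit shape vanishes at an interior rough point. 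So the inverse function theorem is simply unavailable at such points, and your chain ``$\psi_{\mathtt w}$ smooth $\Rightarrow$ $x_{\mathtt w}(\rho)$ smooth $\Rightarrow$ $v_{\mathtt w}$ smooth'' breaks. Your closing suggestion to use the complex Burgers parametrization is more promising, since that description does give a locally biholomorphic change of variables in the rough region, but you would need to actually carry this out (and check it yields a global smooth inverse $\rho\mapsto x_{\mathtt w}(\rho)$), which you have not done.
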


\begin{Remark}
  We know that the determinant of the Hessian matrix $J(x)$ of
  $\psi_{\mathtt w}$ is negative or zero on the rough region
  $\mathcal F_R$; if we knew that the inequality is everywhere strict,
  $C^\infty$ continuity of $v_{\mathtt w}(\cdot)$ would easily follow
  from formula \eqref{eq:implicit} below and from further derivation
  w.r.t. $\rho$. On the other hand, non-vanishing of $J(x)$ in the
  rough region is not a general property of macroscopic shapes of
  dimer models. For instance, for the dimer model on the honeycomb
  graph with uniform weights, one can verify from the explicit
  solution \cite{CLP98} that the macroscopic shape $\psi$ in a
  hexagonal domain has a Hessian with strictly negative determinant in
  the whole rough region, except at a single point (the center of the
  domain), where all entries of the Hessian matrix are zero. To overcome this problem, for the proof of Proposition \ref{prop:vsmooth} we will not
  rely directly on analytic properties of the limit shapes, bur rather
  on the definition \eqref{eq:prov} of the speed and on the properties
  of the dimer measure $\pi_{\rho,\mathtt w_j}$ under
  the dynamics $\{\mathtt w_j\}_{j\ge 0}$ of the edge weights (``spider move dynamics'').
\end{Remark}

From Proposition \ref{prop:vsmooth} and the formula \eqref{eq:chiev} for the speed, we deduce 
\begin{eqnarray}
  \label{eq:Dv}
  D v_{\mathtt w}(\rho)=-x_{\mathtt w}(\rho), \quad \rho\in\mathcal R.
 \end{eqnarray}
 By the way, this shows that $x_{\mathtt w}(\rho)$ is unique for
 $\rho$ in the rough region.  This formula also allows to prove that
 the speed is not $C^1$ at smooth slopes. Indeed, we know from Theorem
 \ref{prop:KO} that for every $\bar\rho\in \mathcal S$, there exists
 $x_{\mathtt w}(\bar\rho)$ in the interior of $Q$, where the slope of
 $\psiw$ is $\bar\rho$. Moreover, it is known \cite{Duse} that, since the boundary condition $\psi|_{\partial Q}$
 is ``natural'' (cf. footnote \ref{foot:natural}), the set
 $B_{\bar \rho}:=\{x\in Q: \nabla\psiw(x)=\bar \rho\}$ is a closed set
 with non-empty interior.  Letting $x\in \mathcal F_R$ approach
 different points of $B_{\bar\rho}$ (so that $\nabla\psiw(x)$
 approaches $\bar\rho$, by continuity of $x\mapsto \nabla\psiw(x)$),
 we see from \eqref{eq:Dv} that $D v_{\mathtt w}(\rho)$ does not have
 a unique limit as $\rho\to\bar \rho$.

From \eqref{eq:Dv} we see also that, for $\rho\in \mathcal R$,
\begin{eqnarray}
  \label{eq:implicit}
  D^2 v_{\mathtt w}(\rho) = - J(x_{\mathtt w}(\rho))^{-1},
\end{eqnarray}
where the $2\times 2$ Jacobian matrix $J(\cdot)$ is as in \eqref{eq:J}.
We already know that 
 $\det(J(x))\le0$, and the fact that the speed is $C^2$ means that the inequality is strict. In particular,
\begin{eqnarray}
  \det( D^2 v_{\mathtt w}(\rho))<0
\end{eqnarray}
as wished.

\begin{proof}
  [Proof of Proposition \ref{prop:vsmooth}]
  Let $f$ be an even face. From \eqref{eq:prov} and \eqref{eq:ch2} one
  has, with $\mathtt w\equiv \mathtt w_0$, \begin{eqnarray}
  \label{eq:cisiamoquasi}
           v_{\mathtt w}(\rho)=\lim_{k\to\infty}\frac1{4k} \sum_{j=0}^k\pi_{\rho,\mathtt w_j}[H(\eta)-V(\eta)].
  \end{eqnarray}
On the other hand, recall from \eqref{eq:hor} and \eqref{eq:vert} that $H(\eta),V(\eta)$ are sums of dimer indicator functions.
From the determinantal structure of the measures $\pi_{\rho,\mathtt w}$, one has an explicit expression for the probability that an edge $e$ is occupied.
Assume that the white endpoint of
$e$ is  in the fundamental domain $D_{m_1,m_2}$ (that is the
translation of $D_{0,0}$ by $2m_1 n$ in the horizontal direction and by
$2 m_2 n$ in the vertical one) and that, modulo this translation, it is equivalent to the white vertex $x$ of the fundamental domain $D_{0,0}$. Similarly, assume that the black endpoint is in $D_{\ell_1,\ell_2}$ and that it is equivalent to the black vertex $y$ in $D_{0,0}$.
Then,
  \begin{eqnarray}
    \label{eq:Ksol}
    \pi_{\rho,\mathtt w}[e\in \eta]= \mathbb K_{\mathtt w}(e)\mathbb K^{-1}_{\mathtt w}(e)
  \end{eqnarray}
  where
  $\mathbb K_{\mathtt w}(e)$ equals the $\mathtt w$-weight of $e$, times the complex unit $\mathrm{i}$ if the edge is vertical, while
  \begin{eqnarray}
      \label{eq:skoda}
	\mathbb K_{\mathtt w}^{-1}(e)=    \frac1{(2\pi \mathrm{i})^2}\int_{\substack{ |z|=e^{B_1} \\ |w|=e^{B_2}}} [K(z,w)^{-1}]_{y,x}  z^{m_1-\ell_1}w^{m_2-\ell_2}\frac{dz}{z} \frac{dw}{w}  .
    \end{eqnarray}
We recall that $K(z,w)$ is the $2 n^2\times 2n^2$ Kasteleyn
  matrix of the fundamental domain $D_{0,0}$ (recall Section
  \ref{sec:pwat}) and $B=B(\rho)= (B_1(\rho),B_2(\rho))$ is the value
  that realizes the supremum in \eqref{eq:sigma}.
  For
  $\rho=(\rho_1,\rho_2)\in \mathcal R$ the maximizer   is unique and the relation between
  $\rho$ and $B(\rho)$, through
  \begin{eqnarray}
    \label{eq:srBr}
\nabla \sigma(\rho)=B(\rho),    
  \end{eqnarray}
  is a $C^\infty$ diffeomorphism between $\mathcal R$ and
  $A(P)\subset \mathbb R^2$ (the amoeba of $P$, $A(P)$, defined as the
  image of the curve $P(z,w)=0$ in $\mathbb{C}^2$ under the map
  $(z,w) \mapsto (\log |z|, \log |w|)$)~\cite{Kenyon2003}.  We will
  prove:
  \begin{Lemma}
    \label{lemma:Cinft}
  The r.h.s. of \eqref{eq:skoda} is a $C^\infty$ function of $B$.  
  \end{Lemma}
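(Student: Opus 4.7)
The plan is to reduce smoothness of the integral in \eqref{eq:skoda} to a local analysis near the finitely many zeros of $P$ on the torus of integration, and to eliminate the $B$-dependence of the singularity by a smooth, $B$-dependent change of variables.

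First I would parametrize using $z = e^{B_1 + \I \theta_1}$, $w = e^{B_2 + \I \theta_2}$ to rewrite \eqref{eq:skoda} as
\[
\mathbb K_{\mathtt w}^{-1}(e) = e^{B_1 (m_1-\ell_1) + B_2 (m_2-\ell_2)}\, J(B),
\]
where
\[
J(B) = \int_{[0,2\pi)^2} \frac{N(\theta;B)}{P(\theta;B)}\, e^{\I (\theta_1 (m_1-\ell_1) + \theta_2 (m_2-\ell_2))}\, \frac{\D\theta_1 \D\theta_2}{(2\pi)^2},
\]
with $N(\theta;B)$ the $(y,x)$-cofactor of $K(e^{B_1+\I\theta_1}, e^{B_2+\I\theta_2})$---a trigonometric polynomial, hence jointly $C^\infty$ in $(\theta, B)$---and $P(\theta;B) := \det K$ evaluated at the same point. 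The prefactor is manifestly smooth in $B$, so it suffices to prove $J$ is $C^\infty$.

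Second, fix $B^*$ in the interior of the amoeba $A(P)$. The set $S(B^*) := \{\theta \in [0,2\pi)^2 : P(\theta; B^*) = 0\}$ is finite, and at each $\theta^* \in S(B^*)$ the differential of $\theta \mapsto (\Re P, \Im P)$ as an $\R^2 \to \R^2$ map is invertible; this transversality is precisely what characterizes points in the interior of the amoeba (see~\cite{Kenyon2003}). By the implicit function theorem, the points of $S(B)$ depend smoothly on $B$ in a small neighborhood $U$ of $B^*$. Introduce a smooth partition of unity $1 = \chi_0(\theta) + \sum_i \chi_i(\theta)$ on the torus, with $\chi_0$ supported away from $S(B^*)$ and each $\chi_i$ supported in a small neighborhood $V_i$ of the $i$-th point of $S(B^*)$, chosen so that the smooth continuation of the corresponding zero of $P(\cdot; B)$ remains in $V_i$ for $B \in U$. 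The $\chi_0$-contribution to $J(B)$ has jointly $C^\infty$ integrand (since $P$ does not vanish on $\mathrm{supp}\,\chi_0$ for $B \in U$), so smoothness in $B$ follows by differentiation under the integral sign.

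Third, for each $\chi_i$-contribution I would switch to $(u,v) := (\Re P(\theta; B), \Im P(\theta; B))$ as new integration variables on $V_i$; by the transversality above this is a $B$-dependent smooth change of variables with smoothly varying Jacobian. The local integral takes the form
\[
J_i(B) = \int_{\R^2} \frac{\Phi_i(u, v; B)}{u + \I v}\, \D u \, \D v,
\]
where $\Phi_i$ is jointly $C^\infty$ in $(u,v,B)$ and compactly supported in $(u,v)$ uniformly for $B \in U$.

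Finally, the kernel $1/(u+\I v)$ is locally integrable on $\R^2$ and does \emph{not} depend on $B$. Since $\Phi_i$ together with all its $B$-derivatives is $C^\infty$ and compactly supported uniformly in $B \in U$, differentiating $J_i(B)$ any number of times in $B$ under the integral sign yields absolutely convergent integrals. The main conceptual obstacle, precisely resolved by the change of variables in the previous step, is the following: before the change, naive differentiation of $J(B)$ in $B$ produces $1/P^2$-type singularities that fail to be integrable in two real dimensions; after the change, the singularity $1/(u+\I v)$ is $B$-independent, and all the $B$-dependence resides in the smooth, compactly supported amplitude $\Phi_i$. This gives the desired $C^\infty$ regularity of $\mathbb K_{\mathtt w}^{-1}(e)$ in $B$.
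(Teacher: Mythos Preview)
Your proof is correct and follows the same overall strategy as the paper: parametrize by angles, localize near the (finitely many, simple) zeros of $P$ on the torus of integration via a partition of unity, and perform a $B$-dependent change of variables so that the singularity becomes the $B$-independent, locally integrable kernel $1/(u+\I v)$. The technical difference is that you use the \emph{full} map $(\theta_1,\theta_2)\mapsto(\Re P,\Im P)$ as the change of variables, whereas the paper uses only its linearization $(\theta,\phi)\mapsto(\Re P_1^\omega,\Im P_1^\omega)$ at each zero; because of this, the paper must carry along a quadratic remainder $R^\omega$ and split $\tilde Q/P$ into three pieces (a constant-over-linear term that vanishes by symmetry, a vanishing-numerator term, and a remainder term bounded because $R^\omega$ is quadratic), analyzing each separately. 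Your nonlinear change of variables absorbs all of this into the smooth amplitude $\Phi_i$ in one stroke, which is cleaner. Both arguments rely on the same fact---that $zP_z/(wP_w)$ is not real at the zeros when $B$ lies in the interior of the amoeba---which the paper phrases as ``$a^\omega/b^\omega$ is not real'' and you phrase as invertibility of the Jacobian of $(\Re P,\Im P)$; these are equivalent, and both are inherited from the Harnack property of the spectral curve. One minor point to make explicit: since your $\chi_i$ are compactly supported strictly inside the neighborhoods $V_i$ where the change of variables is a diffeomorphism, the amplitude $\Phi_i$ extends by zero to a jointly $C^\infty$ function on $\R^2\times U$, which is what makes differentiation under the integral sign legitimate.
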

   As a consequence, \eqref{eq:Ksol} and therefore
  the sum in \eqref{eq:cisiamoquasi}, for every fixed $k$, are $C^\infty$ functions of $\rho$.
  To conclude the proof of the proposition, we will prove:
  \begin{Lemma}
    \label{lemma:GK}
    Let $\mathtt w=\mathtt w_j$.
  The derivatives (of any order) of \eqref{eq:Ksol} w.r.t. $B$ can be bounded uniformly w.r.t. the index $j$.   
  \end{Lemma}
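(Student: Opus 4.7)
The plan is to combine the integral representation \eqref{eq:skoda} with two ingredients: the gauge invariance of the dimer edge probability \eqref{eq:Ksol}, and the integrability of the spider-move dynamics established in \cite{GK13}.

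The starting point is the following quantitative sharpening of Lemma~\ref{lemma:Cinft}. Regarding \eqref{eq:Ksol} and each of its $B$-derivatives as functions of the pair $(\mathtt w, B)$, one shows that they are \emph{jointly} continuous when $\mathtt w$ ranges over any compact family of positive weights bounded above and away from zero, say $\{\mathtt w: c \le \mathtt w(e) \le C \; \forall e\}$ with $0 < c \le C < \infty$, and $B$ lies in a fixed compact subset of the region in which \eqref{eq:skoda} defines a $C^\infty$ function of $B$. This can be checked by the same arguments used in Lemma~\ref{lemma:Cinft}: after the change of variables $z = e^{B_1+\mathrm{i}\theta}$, $w = e^{B_2+\mathrm{i}\phi}$, the integral lives on the fixed torus $(\theta,\phi) \in [0,2\pi]^2$, and the integrand (together with its $B$-derivatives) depends continuously on $\mathtt w$ and $B$ jointly. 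On such compact product sets, all derivatives of \eqref{eq:Ksol} are uniformly bounded.

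The obstacle is that the weights $\mathtt w_j$ produced by iterating \eqref{eq:wktowk+1} need not stay in any compact range of strictly positive values as $j \to \infty$. I would bypass this via gauge invariance: the product $\mathbb{K}_{\mathtt w}(e)\mathbb{K}_{\mathtt w}^{-1}(e)$, and hence each of its $B$-derivatives, is invariant under gauge transformations $\mathtt w(xy) \mapsto \alpha(x)\beta(y)\mathtt w(xy)$, because under such a transformation every perfect matching acquires the same multiplicative factor and the Gibbs probabilities are unchanged. I can therefore replace $\mathtt w_j$ by any gauge representative $\tilde{\mathtt w}_j$ of its class without modifying the quantities appearing in the lemma.

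By \cite{GK13}, the spider-move dynamics descends to the dimer cluster variety (weights modulo gauge) and preserves all the coefficients of the characteristic polynomial. For generic $\mathtt w_0$, the invariant level set is a compact torus on which the induced dynamics is quasi-periodic, so the gauge classes of $\{\mathtt w_j\}_{j \ge 0}$ lie in a compact subset of the cluster variety; one can then select representatives $\tilde{\mathtt w}_j$ whose edge weights all lie in a common interval $[c, C]$, uniformly in $j$. Combined with the joint continuity of the first paragraph, this yields the uniform bounds claimed in the lemma. The main obstacle is precisely this last step: the boundedness of the gauge orbit rests on the Goncharov--Kenyon integrability machinery, and for non-generic initial weights it requires an additional approximation argument based on the explicit recursion \eqref{eq:wktowk+1}.
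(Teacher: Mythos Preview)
Your proposal is correct and follows essentially the same route as the paper: both reduce the uniformity to showing that the gauge class of $\mathtt w_j$ stays in a compact set, invoke gauge invariance of \eqref{eq:Ksol}, and then appeal to the Goncharov--Kenyon integrability \cite{GK13} to conclude that the face weights remain bounded away from $0$ and $\infty$ along the orbit. The paper is slightly more explicit in that it names the gauge-invariant coordinates (face weights plus the two magnetic coordinates $\mathtt W_1,\mathtt W_2$) and observes that the latter are exactly conserved; it also does not separate the generic and non-generic cases, simply citing \cite{GK13,fock2015inverse,Kenyon2003} for compactness of the orbit in general, whereas you flag the non-generic case as requiring an extra approximation step.
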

  The smoothness claim for $v_{\mathtt w}$ then easily follows from
  \eqref{eq:cisiamoquasi}.
\end{proof}

\begin{proof}[ 
  Proof of Lemma \ref{lemma:Cinft}]
  Assume without loss of generality (by translation invariance) that $\ell_1=\ell_2=0$.
  Write
  \begin{eqnarray}
    [K(z,w)^{-1}]_{y,x}=\frac{Q(z,w)}{P(z,w)}
  \end{eqnarray}
  with $P(z,w)=\det K(z,w)$ the characteristic polynomial and $Q(z,w)$
  (that is also a Laurent polynomial in $z,w$) the cofactor $(x,y)$ of
  $K(z,w)$, so that \eqref{eq:skoda} reduces to
\begin{eqnarray}
\frac{e^{B_1 m_1+B_2 m_2}}{(2\pi)^2}\int_0^{2\pi} d\theta \int_0^{2\pi} d \phi \frac{Q(e^{B_1+i\theta},e^{B_2+i\phi})}{P(e^{B_1+i\theta},e^{B_2+i\phi})}e^{i \theta m_1+i\phi m_2}.
\end{eqnarray}
The prefactor of the integral is smooth and will be dropped; also, we write
$\tilde Q$ for $Q\times e^{i \theta m_1+i\phi m_2}$.
  If $B=B(\rho)$ as in \eqref{eq:srBr} with $\rho\in\mathcal R$, it is known that $(\theta,\phi)\mapsto P(e^{B_1+i\theta},e^{B_2+i\phi})$ has two distinct simple zeros \cite{KOS03}, call them $(\theta^\omega,\phi^\omega),\omega=\pm$.
Write
\begin{eqnarray}
  P(e^{B_1+i\theta},e^{B_2+i\phi})=P^\omega_1+R^\omega:=a^\omega(\theta-\theta^\omega)+b^\omega(\phi-\phi^\omega)+R^\omega
\end{eqnarray}
where $P^\omega_1$ is the first-order Taylor expansion around $(\theta^\omega,\phi^\omega)$. The zeros $(\theta^\omega,\phi^\omega)$ and also $a^\omega,b^\omega$ are real analytic functions of $B_1,B_2$, and the ratio $a^\omega/b^\omega$ is not real.
Write
\begin{eqnarray}
1=f^+(\theta,\phi)+f^-(\theta,\phi)+(1-f^+(\theta,\phi)-f^-(\theta,\phi))  
\end{eqnarray}
where
\begin{eqnarray}
  f^\omega=\chi(|P^\omega_1|) 
\end{eqnarray}
and $\chi:\mathbb R\mapsto [0,1]$ is a $C^\infty$ function that equals $1$ (resp. $0$) when its argument is smaller than $\epsilon$ (resp. larger than $2\epsilon$), with $\epsilon$ sufficiently small so that the supports of $f^\pm$ are disjoint.
The integral of
\begin{eqnarray}
[1-f^+-f^-]\frac{\tilde Q}P
\end{eqnarray}
is $C^\infty$ w.r.t. $B$.  Now look at the integral of
$f^\omega \tilde Q/P$. Suppose we want to prove it is $C^k$ w.r.t $B$.
Write
\begin{eqnarray}
  \label{eq:Taylor}
\frac{\tilde Q}P=\frac{\tilde Q^\omega}{P^\omega_1}+\frac{\hat Q^\omega}{P^\omega_1}- \frac {\tilde Q R^\omega}  { P\,P^\omega_1},
\end{eqnarray}
with $\tilde Q^\omega:=\tilde Q(\theta^\omega,\phi^\omega)$ and $\hat Q^\omega:=\tilde Q-\tilde Q^\omega$.
Write 
\begin{eqnarray}
  a^\omega\theta+b^\omega\phi=X+i Y:=(\theta\Re (a^\omega)+\phi\Re( b^\omega) )+i (\theta\Im (a^\omega)+\phi\Im( b^\omega) ).
\end{eqnarray}
Since the ratio $a^\omega/b^\omega$ is not real, the Jacobian of the change of variables $(\theta,\phi)\leftrightarrow (X,Y)$ is non-singular.
One has then
\begin{eqnarray}
\nonumber \tilde Q^\omega \int_0^{2\pi} d\theta \int_0^{2\pi}d\phi\frac{f^\omega(\theta,\phi)}{P^\omega_1(\theta,\phi)}
  =\tilde Q^\omega\int_{\mathbb R^2} d\theta d\phi \frac{\chi(|a^\omega \theta+b^\omega\phi|) }{a^\omega\theta+b^\omega\phi}
 \\ =const\times \int_{\mathbb R^2} dX dY \frac{\chi(|X+i Y|)}{X+i Y}
\end{eqnarray}
which is zero by symmetry. 
Next look at
\begin{eqnarray}\int_0^{2\pi} d\theta \int_0^{2\pi}d\phi {f^\omega }\frac{\hat Q^\omega}{P^\omega_1}
  =\int_{\mathbb R^2} d\theta d\phi\,{\chi(|a^\omega \theta+b^\omega\phi|)}\frac{\hat Q^\omega (\theta+\theta^\omega,\phi+\phi^\omega)}{a^\omega\theta+b^\omega\phi}\\
  =  const\times \int_{\mathbb R^2}dX dY\chi(|X+iY|) \frac{\hat Q^\omega(X,Y)}{X+i Y}
  \label{eq:dadc}
\end{eqnarray}
where, with some abuse of notation, we write
\begin{eqnarray}
  \label{eq:abuse}
\hat Q^\omega(X,Y):=
\hat Q^\omega(\theta^\omega+\theta(X,Y),\phi^\omega+\phi(X,Y)).  
\end{eqnarray}
The
constant prefactor has a $C^\infty$ (in fact, real analytic)
dependence on $B$. Also, $\hat Q$ is a polynomial with real analytic
coefficients and it vanishes at least linearly when $(X,Y)$ tends to
zero. Then, it is easy to deduce that \eqref{eq:dadc} is a $C^\infty$
function of $B$.  
Finally, we look at
\begin{multline}
  \label{eq:finally}
 \int_0^{2\pi} d\theta \int_0^{2\pi}d\phi f^\omega  \frac{R^\omega\tilde Q}{P\,P^\omega_1}\\
 =
  const\times \int_{\mathbb R^2}dX dY \chi(|X+iY|)\frac{ \tilde Q (X,Y) R^\omega(X,Y)}{(X+i Y)(X+i Y+ R^\omega(X,Y))}.
\end{multline}
with the same convention as in \eqref{eq:abuse}.
Since $R^\omega$ is at least quadratic for
$X,Y$ close to zero, the  derivatives of order $k$ (w.r.t. the components of $B$) of the integrand are upper bounded by
\begin{eqnarray}
  \label{eq:dm}
  c(k)   
  \chi(|X+iY|)
\end{eqnarray}
uniformly for $B$ in compact sets of the amoeba $A(P)$.  The function \eqref{eq:dm} is  integrable and the claim of the Lemma easily follows.
\end{proof}

\begin{proof}[Proof of Lemma \ref{lemma:GK}]
  We have seen that for each choice of $\mathtt w$, the derivatives of \eqref{eq:Ksol} w.r.t. $B$ are bounded. Now we let $\mathtt w=\mathtt w_j$ and we need to show uniformity of the bounds w.r.t. $j$.
It is immediate to see that uniformity follows if all edge weights stay bounded away from $0$ and $\infty$, uniformly in $j$.
  
Let us recall that the probability measure $\pi_{\rho,\mathtt w}$
depends on the edge weights only modulo gauge transformations \cite[Sec. 3.2]{KenLectures}. That
is, if edge weights $\mathtt w$ are changed as
$\mathtt w(e)\mapsto \mathtt w(e) f(b)g(w)$, with $e$ the edge with
black/white endpoints $b/w$ and $f/g$ two non-vanishing functions
defined on black/white vertices, then the measure is unchanged. In the
($2n\times 2n$) periodic setting with fundamental domain $D_{0,0}$ as
in the present work, the knowledge of the edge weights modulo gauge is
equivalent to the knowledge of:
  \begin{enumerate}
  \item the ``face weights'': for each of the $4n^2$ faces $f$ of the
    fundamental domain $D_{0,0}$, one lets $\mathtt w(f)$ be the
    alternate product
    \[ \frac{\mathtt w(e_1)}{\mathtt w(e_2)} \frac{\mathtt
        w(e_3)}{\mathtt w(e_4)}
  \]
  with $e_1,\dots, e_4$ the four boundary edges of $f$ labeled
  cyclically clocwise, with $e_1$ chosen such that it is clockwise
  oriented from white to black endpoint.  Actually, the product of
  face weights over all faces gives $1$, so we need to know only
  $4n^2-1$ of them.
\item the ``magnetic coordinates'', i.e.  the alternate product $\mathtt W_1$ (resp. $\mathtt W_2$) of the
  weights of the edges belonging to a cycle on $D_{0,0}$
  with winding number $(1,0)$ (resp. $(0,1)$). 
\end{enumerate}
If the face weights, as well as $\mathtt W_1,\mathtt W_2$, are all
bounded away from $0$ and $+\infty$, then there exists a suitable
gauge such that edge weights are also all bounded away from $0$ and
$+\infty$.

When the weights $\mathtt w$ evolve along the sequence
$\{\mathtt w_j\}_{j\ge0}$ associated to the shuffling algorithm, the
magnetic coordinates $\mathtt W_1,\mathtt W_2$ stay constant
\cite{GK13}. This is related to the fact that the measure
$\pi_{\rho,\mathtt w_j}$ is mapped to $\pi_{\rho,\mathtt w_{j+1}}$ and
the slope $\rho$ is unchanged, recall Proposition
\ref{prop:misuramappata}.  On the other hand, the face weights do
change with $j$: in general they are not periodic in time but only
quasi-periodic, they stay in a compact set (that depends on the
initial weights $\mathtt w_0$) and they approach neither zero nor
infinity. This can be extracted from the classical integrability of
the dynamics of the face weights under the spider moves \cite{GK13}
(cf. also \cite{fock2015inverse} and \cite[Sec. 3]{Kenyon2003}). More explicitly, the spider move preserves the spectral curve and for positive-real-valued edge weights, the common level set of the Hamiltonians  is homeomorphic to a finite cover of the product of the compact ovals of the spectral curve.  
\end{proof}

{\bf Acknowledgements} We would like to thank Sanjay Ramassamy for
discussions on the Goncharov-Kenyon dynamical system, and Erik Duse
for sharing the results of \cite{Duse} before publication and for
discussions on the variational principle. We would also like to thank the referees for their careful reading and constructive comments. 
F.T.  was partially
supported by the CNRS PICS grant 151933, by ANR-15-CE40-0020-03 Grant
LSD and ANR-18-CE40-0033 Grant DIMERS. S.C acknowledges the support of EPSRC grant EP /T004290/1. 

\bibliographystyle{plain}
\bibliography{Biblio}

\end{document}